\newtheorem{theorem}{Theorem}[section] 
\newcommand{\newsec}[1]{\setcounter{equation}{0}\section{#1}}
\newcommand{\be}{\begin{equation}}
\newcommand{\ee}{\end{equation}}
\newcommand{\eq}[1]{(\ref{eq:#1})}
\newcommand{\bi}{\begin{itemize}}
\newcommand{\ei}{\end{itemize}}
\newcommand{\ben}{\begin{enumerate}}
\newcommand{\een}{\end{enumerate}}
\newcommand{\noi}{\noindent}
\newcommand{\C}{\mathbb{C}}
\newcommand{\N}{\mathbb{N}}
\newcommand{\R}{\mathbb{R}}
\newcommand{\Z}{\mathbb{Z}}
\newcommand{\floor}[1]{\lfloor #1 \rfloor}
\newcommand{\ul}[1]{\underline{#1}}
\renewcommand{\vec}[1]{\mbox{\boldmath $#1$}}
\def\tens#1{\ensuremath{\mathsf{#1}}}
\newcommand{\ds}[1]{{\displaystyle #1}}
\newcommand{\eps}{\varepsilon}
\newcommand{\wht}[1]{\widehat{#1}}
\newcommand{\st}{\rule[-1.5ex]{0mm}{4ex}}
\newcommand{\cutout}[1]{}
\newtheorem{defn}{Definition}[section]
\newcommand{\dt}{h}
\newcommand{\dx}{\Delta x}
\newcommand{\oo}{{\cal O}}
\newcommand{\tp}{t'}
\newcommand{\pole}{s}
\newcommand{\intK}[1]{K^{(-#1)}}
\newcommand{\lap}[1]{\overline{#1}}
\newcommand{\w}{\omega}
\newcommand{\q}{q}
\newcommand{\vv}{v}
\newcommand{\U}{U}
\newcommand{\V}{V}
\newcommand{\Q}{Q}
\newcommand{\Ztrans}[1]{{\cal Z}\{#1\}}
\newcommand{\x}{x}
\newcommand{\y}{y}
\newcommand{\symb}{B}
\newcommand{\db}{\phi^*}
\newcommand{\vecx}{\vec{x}}
\newcommand{\vecy}{\vec{y}}
\newcommand{\vecw}{\vec{\w}}
\newcommand{\cumulmax}{Z}
\newcommand{\Md}{M}
\newcommand{\ele}{\Gamma}
\title{Convolution spline approximations for time domain boundary integral equations}
\author{Penny J
Davies\thanks{Department of Mathematics and Statistics, University
of Strathclyde, 26 Richmond St, Glasgow, \mbox{G1 1XH}, UK;
\texttt{penny.davies@strath.ac.uk}} \and Dugald B
Duncan\thanks{Maxwell Institute for Mathematical Sciences,
Department of Mathematics, Heriot--Watt University,
Edinburgh, \mbox{EH14 4AS}, UK;
\texttt{D.B.Duncan@hw.ac.uk} (address for correspondence)}}
\date{\textbf{To appear in Journal of Integral Equations and Applications\\ \ \\ This version: \today}\\ \ \\ (First version: April 6, 2012)}
\begin{document}

\maketitle

\begin{abstract}
\noi
We introduce a new ``convolution spline'' temporal approximation of
time domain boundary integral equations (TDBIEs).  It shares some properties of convolution quadrature (CQ), but instead of being based on an underlying ODE solver the approximation is explicitly
constructed in terms of compactly supported basis functions.  This results in sparse system matrices and makes it computationally more efficient than using the linear multistep version of CQ for TDBIE time-stepping.  We use a Volterra integral equation (VIE) to illustrate the derivation of this new approach: at time step $t_n = n\dt$ the VIE solution is approximated in a backwards-in-time manner in terms of basis functions $\phi_j$
by $u(t_n-t) \approx \sum_{j=0}^n u_{n-j}\,\phi_j(t/\dt)$ for $t \in [0,t_n]$.
We show that using isogeometric B-splines of degree $m\ge 1$ on $[0,\infty)$ in this framework gives a second order accurate scheme, but cubic splines with the parabolic runout conditions at $t=0$ are fourth order accurate.  We establish a methodology for the stability analysis of VIEs and demonstrate that the new methods are stable
for non-smooth kernels which are related to convergence analysis for TDBIEs, including the case of a Bessel function kernel oscillating at frequency $\oo(1/\dt)$.  Numerical results for VIEs and for TDBIE problems on both open and closed surfaces confirm the theoretical predictions.
\smallskip

\noi \textbf{Keywords:} Convolution quadrature, Volterra integral equations, time dependent boundary integral equations
\smallskip

\noi
\textbf{AMS(MOS) subject classification:} 65R20, 65M12

\end{abstract}

\newsec{Introduction}
\label{sec:intro}

Convolution quadrature (CQ) time-stepping for time-dependent boundary integral equations (TDBIEs) was first proposed and analysed by Lubich in 1994 \cite{Lu1994}.  Since then the inherent stability and ease of implementation of CQ (as compared to a full space-time Galerkin approximation) has made it a very popular choice for TDBIE problems -- a search on \texttt{"convolution quadrature" "boundary"} in the Thomson Reuters Web of Science database yields nearly 200 hits.  Unfortunately there is a drawback: the effective support of the time basis functions $\phi_j(t)$ which underpin CQ increases with $j$, and this increases the computational complexity of the solution algorithm.
Here we describe a new ``convolution spline'' approximation framework which shares some properties with CQ, but is explicitly constructed in terms of compactly supported basis functions which are (mainly) translates -- this makes it easy to implement and computationally efficient.
We apply it to the TDBIE problem
 \be
  \frac{1}{4 \pi}\, \int_\Gamma \frac{u(\vecx',t\!-\!|\vecx'\!-\!\vecx|)}
  {|\vecx'\!-\!\vecx|}\; d\vecx' = a(\vecx,t)\quad
  \mbox{for $\vecx \in \Gamma$, $t > 0$}
  \label{eq:tdbie}
\ee
for $u$ -- this is the single layer potential equation for acoustic scattering
from the surface $\Gamma \subset \R^3$ with zero Dirichlet boundary
conditions and (known) incident field $-a(\vecx,t)$, which is equivalent to
\[
  \int_0^t \int_\Gamma k(\vecx'\!-\!\vecx, t-t')\, u(\vecx',t')\; d\vecx'\, dt' = a(\vecx,t)\quad \mbox{for}\quad k(\vec{z},t) = \frac{\delta(t-|\vec{z}|)}{4 \pi\, |\vec{z}|}\;.
\]

We use the convolution--kernel Volterra integral equation (VIE)
\be
    \int_0^t K(\tp)\, u(t-\tp)\, d\tp = a(t)\,,
  \quad t \in [0,T]\,
\label{eq:vie}
\ee
to illustrate the derivation of the new approximation method and its
convergence and stability properties.  However, the focus of the paper is not on deriving new methods for VIEs (of which there are already very many), but on using the insight gained from VIEs to derive new methods which have good properties for TDBIEs.

\subsection{Properties of TDBIE approximations}
\label{sec:intro.tdbie}

Designing a good approximation scheme for the TDBIE \eq{tdbie} is nontrivial;
challenges include ensuring that it is numerically stable, it is not prohibitively hard to implement for a given scattering surface $\Gamma$, and its computational complexity is not infeasibly high.  We begin by briefly summarising the pros and cons of some of the main approaches (see also \cite{Co2004,HaD2003}).

Bamberger and Ha~Duong \cite{BaHaD1986} proved that a full Galerkin approximation of \eq{tdbie} in time and space is stable and convergent for smooth, closed $\Gamma$ (this was extended to the case of open, flat $\Gamma$ in \cite{HaD1990}), but the stability of the
method relies on all the integrals being evaluated very accurately
(the key insight on how to do this was provided by
Terrasse \cite{TerrassePhD}).  In practice this involves converting five dimensional volume integrals over irregular (non-polygonal)
sub-regions of $\Gamma \times \Gamma \times [0,T]$ to surface integrals which are then evaluated using high precision quadrature,
and is extremely complicated to successfully implement in practice, even for relatively simple $\Gamma$.
Collocation schemes for \eq{tdbie} are far more straightforward to implement, but there is little rigorous convergence analysis for them, and numerical instability is often an issue.  As noted above, methods which use a Galerkin approximation in space and CQ in time have obvious attractions: they are based on rigorous theoretical analysis \cite{BaHaD1986,Lu1994} (see also \cite{Do-Sa2011} for some new bounds) and are relatively straightforward to implement.  They are also inherently far more stable than those which use Galerkin or collocation time approximations (Lubich showed in \cite{Lu1994} that the CQ method remains stable when the inner product integrals are approximated), but unfortunately the disadvantage this time is higher computational complexity.

All three approaches approximate \eq{tdbie} as a convolution sum of the form
$\sum_{j=0}^n \tens{Q}^j\, \ul{U}^{n-j} = \ul{a}^n$, which is rearranged
to give the time-stepping scheme
 \be
  \tens{Q}^0\, \ul{U}^n = \ul{a}^n - \sum_{j=1}^n \tens{Q}^j\, \ul{U}^{n-j}
\label{eq:BIEscheme}
 \ee
for $\ul{U}^n \in \R^{N_S}$, the representation of the spatial
approximation of $u$ at or near time $t^n = n\, \dt$, where the
right-hand side vector $\ul{a}^n$ is derived from $a(\vecx,t)$.   In
the case of both Galerkin and collocation approximations the
matrices $\tens{Q}^j \in \R^{N_S \times N_S}$ are sparse -- the number of
nonzero elements per row of matrix $\tens{Q}^j$ is $\oo(\min\{j,\,
N_S^{1/2}\})$.  In particular this means that \eq{BIEscheme} can be
solved in $\oo(N_S^{3/2})$ operations once the right-hand side is known, and
the overall computational complexity to obtain the approximate
solution up to time $N_T\, \dt$ is $\oo(\min\{N_T^3\, N_S,\, N_T^2\,
N_S^{3/2}\})$ operations.  For these hyperbolic problems it is usual
to use a timestep $\dt$ commensurate with the side $\dx$ of a
typical space mesh element, and in this case $N_T \approx N$ and
$N_S \approx N^2$ for $N = 1/\dx$, and the total computational
complexity is $\oo(N^5)$.  Although this compares somewhat
unfavourably with the $\oo(N^4)$ computational complexity of a
finite difference or finite element approximation of the PDE
formulation of the acoustic wave equation in $\R^3$, the plane wave
``fast'' methods developed by Michielssen and co-workers
\cite{ErShMi1999,ErShMi2000,LuWaErMi2000}
reduces the complexity to $\oo(N^3\, \log^2 N)$.

Using CQ in time results in a solution algorithm \eq{BIEscheme} in which the matrices
$\tens{Q}^j$ are
dense, because the underlying basis functions are global (see e.g.\ Sec.~\ref{sec:CQ} below, or \cite{Ba2010,HaKrSa2009} for more
details), which increases the computational complexity to $\oo(N_S^2\, N_T^2)$. The issue is
not solving \eq{BIEscheme} for $\ul{U}^n$ (which can typically be
done efficiently by approximating $\tens{Q}^0$ appropriately), but in
performing the matrix--vector products needed to calculate the
right-hand side. Lubich explains that the technique of
\cite{HaLuSch} can be used to reduce the overall complexity to
$\oo(N_S^2\, N_T\, \log^2 N_T)$, i.e.\ $\oo(N^5\, \log^2 N)$.
A cut-off strategy to replace small matrix
entries by zero is described and analysed in
\cite{HaKrSa2009}, and this reduces the storage costs of the method.
This is combined with panel clustering in \cite{KrSa2008} to further
reduce the storage costs.  However, because the effective support of the time basis functions increases with index (see Fig.~\ref{fig:basis_BBDF2} or \cite[Fig.~2.2]{cq2}), the computational complexity is a factor of $\sqrt{N}$ higher than that for approximations which use local basis functions.

CQ methods which are based on underlying Runge--Kutta ODE solvers
have also been developed and analysed for TDBIEs
\cite{BaLu2011,BaLuMe2011}.  There are several advantage of these
methods over linear multistep CQ methods: the basis functions are
more highly concentrated \cite[Figs 1--2]{Ba2010}, which makes
sparsifying the $\tens{Q}^j$ matrices more straightforward; and higher
order accurate methods in time are possible.  Banjai \cite{Ba2010} uses this
approach to develop a practical, parallelizable
solution algorithm for \eq{tdbie} which he illustrates with a number
of realistic large-scale numerical examples.

\subsection{New convolution spline methods}
\label{sec:intro.cspl}

The $\tens{Q}^j$ system matrices in \eq{BIEscheme} for our new method have the same sparsity pattern as for the Galerkin or collocation approximations described above, and so it is considerably more efficient (both to set up by calculating the system matrices, and to run) than using the linear multistep version of CQ.  Our method gives
a TDBIE solution scheme whose overall complexity is
$\oo(\min\{N_T^3\, N_S,\, N_T^2\, N_S^{3/2}\}) = \oo(N^5)$ operations (and which could also be potentially speeded up using fast methods).  It is also far easier to implement than the full space--time Galerkin approach.

We derive the new approximation as a solution method for the VIE \eq{vie}, with $u$ approximated in terms of B-spline basis functions in a backwards-in-time framework.  Our initial approach is to use isogeometric B-splines of degree $m$ on $[0,\infty)$.  There can be advantages in using higher order values of $m$ even though the formal convergence rate of this scheme for a smooth VIE problem is limited to second order (because it is based on quasi-interpolation by the Schoenberg B-spline operator).  For example,
as noted in
\cite{SaVe2011b}, using smooth temporal basis functions greatly simplifies
approximating the integrals in \eq{tdbie}.
We also consider cubic B-splines with the parabolic runout condition at $t=0$ and show that these are fourth order accurate.
We carefully test out the new methods on \eq{vie}, establishing formal convergence, and examining the behaviour for kernels which mimic some of the important properties of TDBIE problems, such as discontinuous step-function kernels (see e.g.\ \cite{SaVe2011a}).  Another important test problem is obtained from taking the spatial Fourier transform of \eq{tdbie} at frequency
$\vecw \in \R^2$ when $\Gamma =
\R^2$.  This is
\be
  \int_0^t J_0(\w t')\, \wht{u}(\vecw, t-t')\, dt' =  2
  \wht{a}(\vecw, t)\,,
\label{eq:BFvie}
\ee
where $\w = |\vecw|$ and $J_0$ is the first kind Bessel function of order zero.  As noted in \cite{rpie}, instabilities of approximation schemes for \eq{tdbie} are typically exhibited at the highest spatial frequency which can be represented on the mesh.  Hence it is important to ensure that any prototype numerical scheme for time-stepping \eq{tdbie} is stable for \eq{BFvie} at values of $\w = \oo(1/\dt)$ (assuming $\dt \approx \dx$).

\subsection{Outline}
\label{sec:intro.plan}

Section \ref{sec:CQ} contains an alternative derivation of Lubich's \cite{Lu1988a} CQ method for \eq{vie} in terms of basis functions which have the sum to unity property \eq{pou}.  The new convolution spline approximation of \eq{vie} is described in Section \ref{sec:spline} in terms of basis functions which have compact support and are (essentially) all translates, and we give sufficient conditions for this approximation to be stable.  We consider the case in which the basis functions are $m$th degree isogeometric B-splines on $[0, \infty)$ in Section \ref{sec:bspline}, showing how Laplace transform techniques can be used to prove the stability of this approximation of \eq{vie} for several different test kernels, and demonstrating second order convergence for \eq{vie} when $K$ and $a$ satisfy
\be \label{eq:smoothness}
  a \in C^{d+1}[0,T]\,, \ K \in C^{d+1}[0,T]\,, \ a(0)=0 \
  \mathrm{and} \ K(0) = 1
\ee
for suitable $d \ge 0$.  Under these assumptions, equation \eq{vie}
possesses a unique solution $u \in C^{d}[0,T]$ --  e.g.\ see,
\cite[Theorem 2.1.9]{brunnerbook}.

In Section~\ref{sec:modbs} we consider a cubic convolution spline basis which is modified near $t = 0$ to satisfy the parabolic runout conditions, and show that this gives a far more stable approximation of \eq{vie} which is fourth order convergent.  Numerical tests  show that it achieves fourth order accuracy even for a discontinuous kernel.
We present numerical test results for TDBIEs in Section~\ref{sec:tdbie} which use a Galerkin approximation in space (based on triangular piecewise constant elements), and the new cubic convolution spline basis in time, for both open and closed surfaces $\Gamma$.  These show that the new scheme performs far better than CQ based on BDF2 -- it is both more accurate and more efficient.

The TDBIE test problems are similar to those considered in \cite{cq2} which
use the convolution--in--time framework with non-polynomial (global) basis functions, but the modified B-spline basis functions give a more accurate temporal approximation.  We note that the time-stepping schemes of \cite{cq2} rely on the theoretical framework developed in Sections~2--3 of the present work.

\newsec{CQ based on linear multistep methods for \eq{vie}}
\label{sec:CQ}

We begin by outlining Lubich's derivation \cite{Lu1988a} of the
CQ method for \eq{vie} in order to show how it
can be reinterpreted in terms of CQ basis functions.  For simplicity
we restrict attention to the case for which the extension of the
solution $u$ by zero to the negative real axis is in
$C^{d}(-\infty,T]$ (otherwise the CQ method needs to be `corrected'
as described in \cite[Sec.\ 3]{Lu1988a} in order to attain optimal
convergence). This is guaranteed by requiring
 \be
  a^{(p)}(0) = 0 \quad \mbox{for $p = 0:d+1$}\,
\label{eq:zerobc}
 \ee
because $u^{(p)}(0) = a^{(p+1)}(0) - \sum_{\ell=0}^{p-1} K^{(p-\ell)}(0)\; u^{(\ell)}(0)$.
We also assume that the Laplace transform $\lap{K}(s)$ of the kernel $K$ is sufficiently well-behaved for all the formal manipulations in the next subsection to be rigorous.  For details see for example \cite[App]{Ba2010} or \cite[Sec.\ 1]{Lu1994}.

\subsection{Lubich's CQ method}
\label{sec:CQ.lub88}

We follow Lubich \cite{Lu1988a} and substitute the Laplace inversion formula for $\lap{K}(s)$ into \eq{vie}
to obtain
\be
  a(t) = \frac{1}{2\pi i} \int_\gamma \lap{K}(s)\, y(t,s)\, ds\,,
\label{eq:vielap}
\ee
where $\gamma$ is an infinite contour within the region of analyticity of $\lap{K}(s)$ and
$\ds{y(t,s) = \int_0^{t} e^{s\tp}\, u(t\!-\!\tp)\, d\tp\,.}$
Treating the Laplace variable $s$ as a parameter, $y(t)$ solves the ODE:
\be
 \dot{y}(t) = s\, y(t) + u(t)\,, \quad y(0) = 0\,,
\label{eq:ode}
 \ee
and this is approximated by the $k-$step ($k \le d$) linear
multistep method with timestep $\dt$
 \be
  \sum_{j=0}^k \alpha_j\, y_{n+j-k} = \dt \sum_{j=0}^k \beta_j\, f_{n+j-k}\,,
\label{eq:lmm}
 \ee
where $t_n = n \dt$, $y_n \approx y(t_n)$ and $f_n = s\, y_n +
u(t_n)$.   The starting values are $y_{-k} = \dots y_{-1} = 0$
because of the assumption \eq{zerobc}.  Multiplying \eq{lmm} by
$\xi^n$ and summing over $n$ (for $\xi \in \C$ for which the sum converges) gives
\[
  \left( \frac{\delta(\xi)}{h} - s \right)\, \sum_{n=0}^\infty y_n\, \xi^n = \sum_{n=0}^\infty u(t_n)\, \xi^n\,, \quad\mbox{where}\quad \delta(\xi) = \left.\sum_{j=0}^k \alpha_j\, \xi^{k-j} \right/ \sum_{j=0}^k \beta_j\, \xi^{k-j}
\]
is the symbol of \eq{lmm}.
Hence $y_n$ is the coefficient of $\xi^n$ in the expansion of
$\left( \frac{\delta(\xi)}{h} - s \right)^{-1} \sum_{k=0}^\infty u(t_k)\, \xi^k$\,.
Substituting $y_n$ for $y(t_n)$ in \eq{vielap} shows that $a(t_n)$ is approximated by the coefficient of $\xi^n$ in
\[
  \frac{1}{2\, \pi\, i}\, \int_\gamma \left( \frac{\delta(\xi)}{h} - s \right)^{-1} \lap{K}(s)\, ds\, \sum_{k=0}^\infty u(t_k)\, \xi^k = \lap{K}(\delta\left(\xi)/\dt\right)\, \sum_{k=0}^\infty u(t_k)\, \xi^k\,
\]
using Cauchy's integral formula.  Hence, defining the CQ weights $\q_k = \q_k(h)$ to be the coefficients in the expansion
\be
  \lap{K}(\delta\left(\xi)/\dt\right) = \sum_{k=0}^\infty \q_k\, \xi^k
\label{eq:defw}
\ee
gives the CQ approximation of \eq{vie}
\be
  a(t_n) = \sum_{j=0}^n \q_j\, u_{n-j}\,.
\label{eq:scheme}
 \ee
This can be
rearranged to give the time-stepping approximate solution $u_n \approx u(t_n)$
\be \label{eq:timemarching}
  u_n = \frac{1}{\q_0}\, \left(a(t_n) - \sum_{j=1}^{n-1} \q_j\, u_{n-j}\right) \quad \mbox{for $n \ge 1$\,,}
\ee
since by assumption $u_0 = u(0) = 0$.

\subsection{Derivation of CQ in terms of basis functions}
\label{sec:CQ.ex}

The CQ approximation scheme \eq{scheme} for the VIE \eq{vie} is defined solely in terms of the weights $\q_k$.  But if CQ is used to time-step a TDBIE, then the approximation involves CQ basis functions -- see e.g.\  \cite{Ba2010,HaKrSa2009,MoScSt2011}.  However, we are not aware of a general interpretation of CQ approximation schemes for \eq{vie} in terms of basis functions. As well as yielding some interesting observations, this also gives the framework which we use for the derivation of our convolution spline methods in Sections \ref{sec:bspline}--\ref{sec:modbs}

At $t = t_n := n\dt$ \eq{vie} can be written as
\be
   a(t_n) = \int_0^\infty K(\tp)\, u(t_n-\tp)\, d\tp\,,
\label{eq:vie-inf}
 \ee
because $u(t) = 0$ for $t \le 0$.
We show below that the standard CQ method is
equivalent to approximating $u$ in \eq{vie-inf} by
 \be
  u(t_n - t') \approx \sum_{j=0}^n\, u_{n-j}\, \phi_j(t'/\dt)\quad
  \mbox{for $t' \ge 0$}
\label{eq:CQapprox}
 \ee
where $\phi_j$ are basis functions, i.e.\ the approximation at $t_n$ is $U_n(t) = \sum_{k=0}^n u_k\, \phi_{n-k}(n-t/\dt)$ for $t \le t_n$.  Note that $\phi_{n-k}(n-t/\dt)$ depends on $n$ -- i.e.\ CQ
is 
\textbf{fundamentally different} from a standard
finite--element type approximation in which an unknown coefficient is always associated with the same basis function.

Substituting \eq{CQapprox} into \eq{vie-inf} and comparing the
resulting expression with \eq{scheme} gives the relationship between the standard CQ weights
and basis functions:
 \be
  \q_j = \int_0^\infty K(t)\, \phi_j(t/\dt)\, dt\,.
\label{eq:wtdef}
 \ee
Comparing this with the
standard CQ definition of $\q_j$ in \eq{defw} gives (see \cite[Eq.\ (3.1)]{Ba2010})
 \be
  e^{-\delta(\xi) t} = \sum_{j=0}^\infty \phi_j(t)\, \xi^j\,.
\label{eq:defphi}
 \ee

An immediate consequence is that the basis functions satisfy the sum to unity property
\be
  \sum_{j=0}^\infty \phi_j(t) = 1\,,
\label{eq:pou}
 \ee
provided the underlying multistep ODE solver is consistent, because in this case $\delta(1)
= 0$.  This new observation is a crucial property which we use in Section \ref{sec:spline}.

\subsection{CQ basis functions for LMMs}
\label{sec:CQ.eg}

Explicit formulae for the $\phi_j(t)$ based on BDF1--2 have been used for TDBIE approximations \cite{HaKrSa2009,MoScSt2011}.  The formula for BDF1 is given in \cite{MoScSt2011}, and in this case
$\phi_j(t) = e^{-t}\, t^j/j!$,
i.e.\ they are Erlang functions, used in statistics as probability density functions and satisfy $\phi_j(t) \ge 0$ and $\int_0^\infty \phi_j(t)\, dt = 1$.  The derivation for
BDF2 is more complicated, and the explicit formula
\[
         \phi_j(t) = \frac{1}{j!}\, H_j(\sqrt{2t}) \left(\frac{t}{2}\right)^{j/2}e^{-3t/2}
\]
is given in \cite{HaKrSa2009}, where $H_j$ is the $j$th Hermite polynomial.
Note that the properties of $H_j$ imply that
$\phi_j(t)$ involves a $j$th degree polynomial and an exponential in $t$
with no fractional powers of $t$.

In principle \eq{defphi} can be used directly to find the basis functions
$\phi_j(t)$ corresponding to any underlying linear multistep ODE
method for \eq{ode}, although this may not be easy in practice.  For the trapezoidal rule $\delta(\xi) = 2\,(1-\xi)/(1+\xi)$ \cite{Ba2010} and \eq{defphi} is $\sum_{j=0}^\infty \phi_j(t)\, \xi^j = e^{-2\, t}\, f(\xi)$,
where $f(\xi) = \exp(4\, t/(1+\xi))$.  This gives $f(\xi) = \sum_{j=0}^\infty f_j\, \xi^j$ where
\[
  f_j = \left.\frac{1}{j!}\, \frac{d^j}{d\xi^j}\, f(\xi) \right|_{\xi =
  0} = \left.\frac{1}{j!\, (4\, t)^j}\, \frac{d^j}{dz^j}\, e^{-1/z} \right|_{z =
  1/(4t)}
\]
using the change of variables $z = (1 + \xi)/(4t)$.  It follows from
\cite[eq.\ 18.5.6]{nist} that $f_j = (-1)^j\, e^{-4\, t}\, L_j^{-1}(4\,t)$\,,
where $L_j^\alpha(x)$ is a Laguerre polynomial.
The identity $L_j^{-1}(x) = L_j(x) - L_{j-1}(x)$ \cite[eq.\ 8.971--5]{grad-ryzh}
gives the trapezoidal rule basis functions $\phi_j(t) = (-1)^j\, \left\{ \ell_j(4\,t) - \ell_{j-1}(4\,t)\right\}$,
where $\ell_j(x) = e^{-x/2}\, L_j(x)$ is the $j$th Laguerre
function.
They are oscillatory, but do satisfy $\int_0^\infty \phi_j(t)\, dt = 1$.
The low order
basis functions are shown in Fig.~\ref{fig:basiscompare} (see also \cite[Fig.\ 1]{Ba2010} and \cite[Fig.~4]{MoScSt2011}).  Fig.~\ref{fig:basis_BBDF2} shows how the CQ basis functions spread out as $j$ increases -- this increases the number of non-zero entries in the $\tens{Q}^j$ matrices of \eq{BIEscheme} and makes CQ time-stepping less efficient.

The direct approach appears intractible for more complicated schemes (even for BDF3), and recurrence relations for the basis functions are given in \cite[Sec.~3.2]{MoScSt2011}.  They can be compactly derived by formally differentiating the generating function \eq{defphi}
with respect to $\xi$ to get
\[
    \sum_{j=1}^\infty j \phi_j(t)\, \xi^{j-1} + t \delta'(\xi)
   \sum_{j=0}^\infty \phi_j(t)\, \xi^j = 0\,,
\]
and then collecting terms in $\xi$.  The initial conditions are $\phi_n(t) \equiv 0$ for $n < 0$, and the
first term of the Taylor expansion of \eq{defphi}
gives $\phi_0(t) = e^{-\delta(0) t} = e^{-\delta_0 t}$.
Recurrence relations for BDF1--4 and the trapezoidal rule are given in Table \ref{tab:recur}.

\begin{table}[t]
\begin{center}
\begin{tabular}{|l|c|c|}
\hline
Scheme & Initial & Recurrence for basis functions \\
       & $\phi_{-n} \equiv 0, n\ge 1$ & $j \ge 1$ \\
\hline
BDF1 &$\displaystyle \phi_0(t) = e^{-t}$ &$\displaystyle j\phi_j(t) - t \phi_{j-1}(t) = 0$ \\
\hline
BDF2 &$\displaystyle \phi_0(t) = e^{-3t/2}$ &$\displaystyle j\phi_j(t) - 2t \phi_{j-1}(t) + t\phi_{j-2}(t) = 0$ \\
\hline
BDF3 &$\displaystyle \phi_0(t) = e^{-11t/6}$ &$\displaystyle j\phi_j(t) - 3t \phi_{j-1}(t) + 3t\phi_{j-2}(t) - t\phi_{j-3}(t)= 0$ \\
\hline
BDF4 &$\displaystyle \phi_0(t) = e^{-25t/12}$ &$\displaystyle j\phi_j(t) - 4t \phi_{j-1}(t) + 6t\phi_{j-2}(t) - 4t\phi_{j-3}(t) + t\phi_{j-4}(t)= 0$ \\
\hline
\hline
Trap.\ rule\ & $\displaystyle \phi_0(t) = e^{-2t}$ & $\displaystyle j\phi_j(t) - 4 t \phi_{j-1}(t) + 2(j-1) \phi_{j-1}(t) + (j-2) \phi_{j-2}(t) = 0$ \\
\hline
\end{tabular}
\caption{\label{tab:recur} Recurrence relations for the CQ basis functions.}
\end{center}
\end{table}

\begin{figure}
\begin{center}
\includegraphics[width=0.8\textwidth]{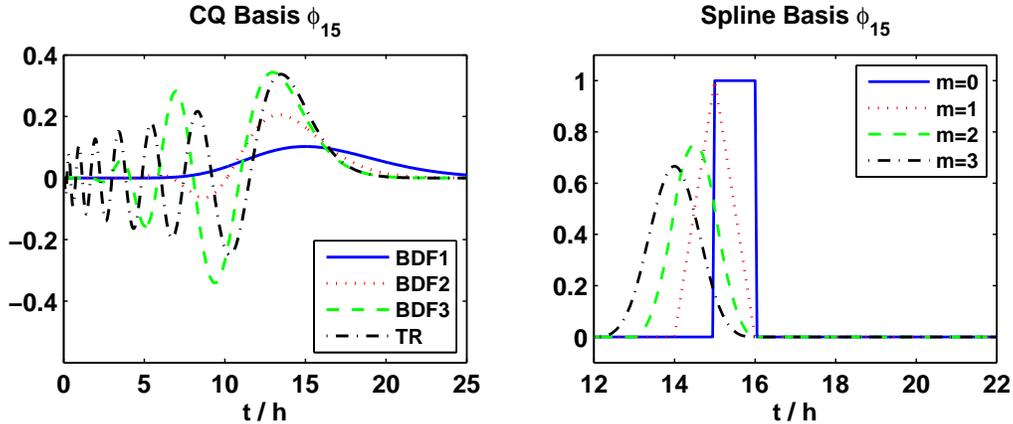}
\end{center}
\caption{\label{fig:basiscompare} Typical CQ and spline basis functions.  See Sections \ref{sec:CQ.eg} and \ref{sec:bspline.props} for details.}
\end{figure}

\begin{figure}[h]
\begin{center}
\includegraphics[width=0.8\textwidth]{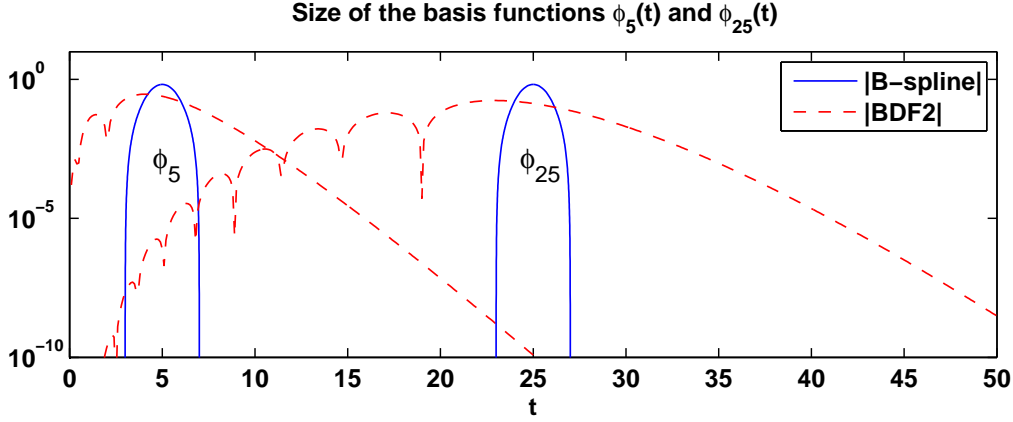}
\end{center}
\caption{\label{fig:basis_BBDF2} Basis functions $\phi_5(t)$ and $\phi_{25}(t)$ for CQ (BDF2) compared with cubic splines (from Sec.~\ref{sec:modbs}).  The effective support of the CQ basis functions $\phi_j(t)$ increases with $j$.}
\end{figure}

\newsec{Convolution spline approach}
\label{sec:spline}

As discussed in Sec.\ \ref{sec:intro}, basis functions with global support (such as those described above) give rise to dense matrices $\tens{Q}^j$ in the TDBIE scheme \eq{BIEscheme}, and this has storage and computational cost implications.  Here we explore the use of compactly supported basis functions, which although not derived via standard CQ, nevertheless do fit into the CQ form \eq{CQapprox}.  We set up a general framework for basis functions for the VIE \eq{vie} which are (mainly) translates, and consider specific examples based on B-splines in Secs.\ \ref{sec:bspline}--\ref{sec:modbs}.  This new approach gives sparse system matrices when used to time-step TDBIEs, and results are presented in Sec.~\ref{sec:tdbie}.  It also provides the underpinning theoretical framework for the TDBIE time-stepping approximations of \cite{cq2}.

\subsection{Construction of a convolution spline scheme for \eq{vie}}

We consider approximations of the form
\eq{CQapprox}, but where all the basis functions $\phi_j$ have {compact support}
of width ${\cal O}(\dt)$ and almost all
are {translates}
of a standard, compactly supported basis function $\phi_m$, i.e.\
\be
  \phi_j(t/\dt) = \phi_m(t/\dt+m -j)\quad \mbox{for $j \ge m$.}
\label{eq:translate}
\ee
When the basis functions are splines, then $m$ is also equal to the polynomial degree. 

Property \eq{translate} means that the approximation $\U(t_n - t) \approx u(t_n - t)$ has the form
 \be
  \U(t_n - t) = \sum_{j=0}^{m-1} \vv_{n-j}\, \phi_j\left(\frac{t}{\dt}\right) + \sum_{j=m}^n \vv_{n-j}\, \phi_m\left(\frac{t}{\dt}+m-j\right)
 \label{eq:cspline}
 \ee
for $t \ge 0$, where $\vv_j$ approximates $u(t)$ for $t$ near
(but not necessarily at) $t_j$, and a sum is defined to be zero if
its upper index is less than its lower index.
Note that when \textbf{all} the $\phi_j$ are translates (as happens for piecewise constant or linear approximations) then $\phi_{n-k}(n-s) \equiv \phi_k(s)$
and the convolution-in-time representation \eq{CQapprox} fits into a standard finite element framework.

Substituting the approximation \eq{cspline} into the integral equation \eq{vie}
and collocating at each time level as described in Section \ref{sec:CQ.ex} gives
\be
  \sum_{j=0}^n \q_j\, \vv_{n-j} = a(t_n)
\label{eq:CSscheme}
 \ee
for $n = 0:N$ where the weights $\q_j$ are defined by \eq{wtdef}.
The unknown coefficients $\left\{\vv_j\right\}_{j=0}^N$ are
then found by time marching as in \eq{timemarching}.  An alternative expression which is useful for analysis is
$\q_0\,\vv_n = \sum_{j=0}^n p_j\, a(t_{n-j})$ for $n \ge 1$,
where the stability coefficients $p_n$ are defined recursively by
\be
  p_0 = 1\,, \quad p_n = \frac{-1}{\q_0}\, \sum_{j=1}^n \q_j\,
  p_{n-j} \quad \mbox{for $n \ge 1$}\,.
\label{eq:defpn}
\ee

\subsection{Stability of \eq{CSscheme}}

For TDBIE applications and analysis
(see e.g.\ \cite{DaDu2004}), we require the scheme \eq{CSscheme} to be stable in the following sense, independent of the input function $a(t)$.

\begin{defn}[Stability] \label{def:stability}
The scheme \eq{CSscheme} is said to be stable when the impulse response
sequence $\left\{p_n\right\}$ defined by \eq{defpn}
satisfies $|p_n| \le C$ for all $n$ such that $n\dt \le T$,
where the constant $C$ is independent of $\dt$.
\end{defn}

This is weaker than BIBO (bounded input bounded output) stability
in the signal processing literature (see e.g.\ \cite{proakis}), which requires boundedness of the
absolute sum $\sum_{n=0}^\infty |p_n| < \infty$.

Stability properties of the scheme \eq{CSscheme} can be established by using the Z-transform, defined as follows.

\begin{defn}
The Z-transform of a sequence $\left\{f_n\right\}_{n=0}^\infty$ is the function $F$ given by
\be
  F(\xi) = \Ztrans{f_n}(\xi) = \sum_{n=0}^\infty f_n\, \xi^n
\label{eq:defZtrans}
\ee
where $\xi \in \C$ with $|\xi| \le 1$ is such that the sum converges.
\label{def:Ztrans}\end{defn}

The scheme \eq{CSscheme} is a convolution sum and its Z-transform is
 \be
  \Q(\xi)\, \V(\xi) = A(\xi)\,,
\label{eq:zt1}
 \ee
where
\be
  \Q(\xi) = \sum_{j=0}^\infty \xi^j \int_0^\infty K(t)\, \phi_j(t/\dt)\, dt
\label{eq:ZtransQ}
\ee
and we take $a_n = a(t_n)$.  The $p_n$ coefficients satisfy $\sum_{j=0}^n q_j\, p_{n-j} = 0$ for $n\ge 1$, and when $n=0$ this ``sum'' is equal to $q_0$ (because $p_0 = 1$), and so the Z-transform of \eq{defpn} is $Q(\xi)\, P(\xi) = q_0$, giving
$P(\xi) = \q_0/Q(\xi)\,$.
We now state a sufficient condition for stability when
$Q(\xi)$ is a rational function.

\begin{theorem}[Root condition for stability] \label{def:rootcond}
If the Z-transform $Q(\xi)$ of $\{q_n\}$ is a rational function in $\xi$, then
the approximation \eq{CSscheme} is stable in the sense of Definition \ref{def:stability}
if the roots $\xi_k$ of $\Q(\xi)$ satisfy the following for any constant $c \ge 0$ (independent of $\dt$):
$|\xi_k| \ge 1/(1+c\dt)$ and any with $1/(1+c\dt) \le |\xi_k| \le 1$ are simple.
\end{theorem}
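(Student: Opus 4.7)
My starting point is the identity $P(\xi) = \q_0/\Q(\xi)$ established just above the statement, where $P(\xi) = \sum_{n\ge 0} p_n\,\xi^n$ is the Z-transform of the impulse response sequence. Because $\Q$ is rational in $\xi$ and $\Q(0) = \q_0 \ne 0$, the transfer function $P$ is itself rational with poles precisely at the roots $\xi_k$ of $\Q$. The plan is to decompose $P$ by partial fractions, invert the series term-by-term to obtain an explicit formula for $p_n$, and then bound each term uniformly in $\dt$ using the hypotheses on the root locations.

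Concretely, I would write
\be
  P(\xi) = \sum_k\, \sum_{j=1}^{m_k}\, \frac{A_{k,j}}{(\xi-\xi_k)^j}\,,
\ee
where $m_k$ is the multiplicity of $\xi_k$. Expanding each partial fraction via the binomial series about $\xi = 0$,
\be
  \frac{1}{(\xi-\xi_k)^j} = (-1)^j\, \xi_k^{-j}\, \sum_{n=0}^\infty \binom{n+j-1}{j-1}\, \xi_k^{-n}\, \xi^n\,,
\ee
and reading off the coefficient of $\xi^n$ gives the closed form
\be
  p_n = \sum_{k,j} (-1)^j\, A_{k,j}\, \binom{n+j-1}{j-1}\, \xi_k^{-n-j}\,.
\ee

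The bound on $|p_n|$ then splits according to the hypothesis on each root. For simple roots ($j=1$), whether $|\xi_k| \le 1$ or $|\xi_k| > 1$, the root lies outside the disk of radius $1/(1+c\dt)$, so
\[
  |\xi_k|^{-n-1} \le (1+c\dt)^{n+1} \le e^{c(n+1)\dt} \le e^{c(T+\dt)}
\]
whenever $n\dt \le T$, giving a bound independent of $\dt$. For roots with multiplicity $m_k \ge 2$, the hypothesis forces $|\xi_k| > 1$ strictly, so $|\xi_k|^{-n}$ decays geometrically; combined with the polynomial prefactor $\binom{n+j-1}{j-1} = \oo(n^{j-1})$, the product $n^{j-1}\,|\xi_k|^{-n}$ is uniformly bounded in $n$ (its maximum is attained near $n = (j-1)/\log|\xi_k|$). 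Summing the finitely many contributions yields the claimed uniform bound $|p_n| \le C$.

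The main obstacle is controlling the partial-fraction coefficients $A_{k,j}$ (and the lower bound on $|\xi_k|$ for multiple roots) \emph{uniformly in} $\dt$, since both the roots of $\Q$ and their residues generally depend on $\dt$. If two roots coalesce as $\dt \to 0$ the residues can blow up, and if a multiple root with $|\xi_k| > 1$ drifts towards the unit circle, the geometric decay no longer dominates the polynomial prefactor uniformly. In practice one must therefore verify, for each specific kernel $K$, that $\Q(\xi)$ admits a well-behaved limit as $\dt \to 0$ in which the roots are separated and those with $|\xi_k| > 1$ are bounded away from the unit circle. This is precisely the spectral information extracted from the Laplace-transform analysis used in Section~\ref{sec:bspline} of the paper.
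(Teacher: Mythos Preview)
Your approach is essentially the same as the paper's: the paper does not give a formal proof but only a one-paragraph justification noting that simple roots with $|\xi_k| = 1/(1+c\dt)$ contribute terms bounded by $(1+c\dt)^n \le e^{cT}$, while roots of that size with multiplicity $\mu \ge 2$ would contribute terms growing like $n^{\mu-1}$. Your partial-fraction expansion and term-by-term inversion is precisely the standard mechanism behind that remark, so you have simply written out what the paper leaves implicit.

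Your final paragraph is well taken and worth keeping in mind: the theorem as stated does not explicitly guarantee that multiple roots with $|\xi_k|>1$ stay bounded away from the unit circle uniformly in $\dt$, nor that the partial-fraction residues remain bounded. The paper sidesteps this by applying the result only to concrete kernels (Sections~\ref{sec:constker}--\ref{sec:BSstab.disc}) where $Q(\xi)$ is computed explicitly and these uniformity conditions can be checked by hand; your observation that this must be verified case-by-case is exactly how the theorem is used in practice. One minor omission: if $Q(\xi)=N(\xi)/D(\xi)$ with $\deg D \ge \deg N$, then $P = q_0 D/N$ has a polynomial part in addition to the proper partial fractions, but this affects only finitely many $p_n$ and so does not alter the bound.
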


Simple roots with $|\xi_k| = 1/(1+c\dt)$ make a bounded contribution to $p_n$ as $n$ increases by the standard result
\[
              |\xi_k|^{-n} = (1+c\dt)^n \le e^{cT}
\]
for $t_n \le T$, but roots of this size with multiplicity $\mu \ge 2$ contribute terms which grow like $n^{\mu-1}$ and hence violate the stability definition.\smallskip

\textbf{Remark:}\
Although this result is a variant of the \textsl{root condition} familiar (after the change of variable $z = 1/\xi$) from zero stability analysis of numerical methods for ODEs, we note that it does not appear to have previously been derived or used to determine the stability of VIE schemes.\smallskip

Verifying the stability condition
directly or via the root condition above for a general approximation scheme
for \eq{vie} may be very complicated.
But as we show below, schemes with the translate
property \eq{translate} can be tackled within the framework of
Laplace transforms originally introduced for CQ, and this approach
gives a way to extend the scope of stability analysis to a far
broader range of kernel functions.

Substituting the Laplace inversion formula for $K$ into
\eq{wtdef} gives $\ds{\q_j =  \frac{\dt}{2\pi i} \int_{\gamma} \lap{K}(s)\, \Phi_j(-s\dt)\,
  ds}$\,,
where $\Phi_j(s)$ is the Laplace transform of $\phi_j$.  Hence the approximation scheme \eq{CSscheme} can be written as
\be
  a(t_n) = \frac{1}{2\pi i} \int_{\gamma} \lap{K}(s)\, y_n(s\dt)\,ds   \label{eq:yie}
\ee
where\ \ $\ds{y_n(s\dt) =   \dt \sum_{j=0}^n \vv_{n-j} \Phi_j(-s\dt)}$\,.
We note that $y_n$ plays the same role here that the approximate solution of the ODE \eq{ode} does in standard CQ.

The translate property \eq{translate} and the compact support of $\phi_m$ implies
\[
  \Phi_j(-s\dt) = e^{s\dt(j-m)}\, \Phi_m(-s\dt) \quad \mbox{for $j \ge m$}\,
\]
and so
\[
  y_n(s\dt) - e^{s\dt}\, y_{n-1}(s\dt) = \dt\, \vv_n\, \Phi_0(-s\dt) +
  \dt \sum_{j=1}^{m} \vv_{n-j}\, \left( \Phi_j(-s\dt) -e^{s\dt}\,\Phi_{j-1}(-s\dt)\right)\,,
\]
(using $\vv_j\equiv 0, j \le 0$).  Taking the Z-transform of this expression gives\ \
$\ds{Y(\xi,s\dt) = \dt\, \symb(\xi,sh)\,\V(\xi)/(1-e^{sh}\xi)}$\ \
when $\xi \ne e^{-s\dt}$, where
\be
  \symb(\xi,s\dt) = \Phi_0(-s\dt) + \sum_{j=1}^{m}
  \left[\st\Phi_j(-s\dt) -e^{s\dt}\,\Phi_{j-1}(-s\dt)\right]\, \xi^j\,.
\label{eq:defgam}
\ee
It hence follows from \eq{yie} that
\[
  A(\xi) = \V(\xi)\frac{\dt}{2\pi i} \int_{\gamma} \lap{K}(s)\, \left(\frac{\symb(\xi,sh)}{1-e^{sh}\xi}\right)\, ds
\]
and comparison with \eq{zt1} yields the alternative representation for the Z-transform of the weights $\q_j$:
\be\label{eq:Qdef}
     \Q(\xi) = \frac{\dt }{2\pi i} \int_{\gamma} \lap{K}(s)\, \left(\frac{\symb(\xi,sh)}{1-e^{sh}\xi}\right)\, ds\,.
\ee

The expression $\symb(\xi,s\dt)/(1-e^{s\dt}\,\xi)$ plays a role similar to that of $\left(\delta(\xi)/\dt - s\right)^{-1}$ in standard CQ analysis, and it is the key quantity in determining whether the scheme is stable or not.
Unfortunately it has a more complicated structure: it
has an infinite vertical line of simple poles at $s = \pole_k$ for
$k \in \mathbb{Z}$, where
\be
\label{eq:poles}
  \pole_k := \frac{1}{\dt}\left(-\ln |\xi| -i\,\mathrm{Arg}(\xi) + i\, 2\pi k\right)
\ee
and the principal argument $\mathrm{Arg}(\xi) \in (-\pi,\pi]$.
Note that if $|\xi|<1$ then $\mathrm{Re}(\pole_k)>0$.

\begin{figure}[t]
\begin{center}
\includegraphics[width=3.2in]{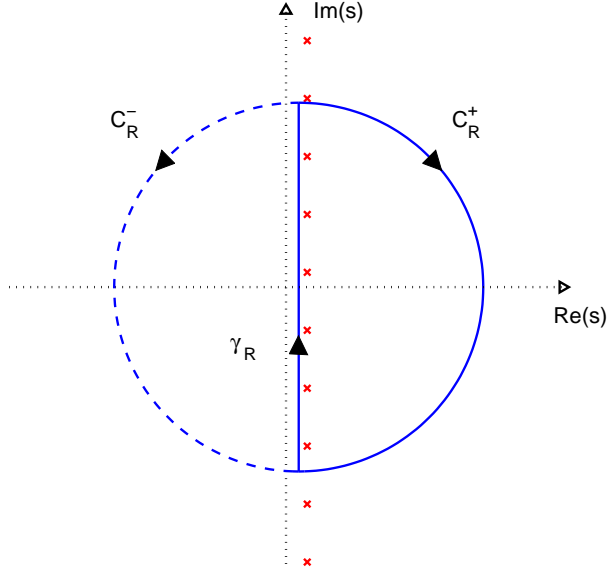}
\end{center}
\caption{\label{fig:contours} The left and right D-contours of radius $R$ used for the stability and Z-transform calculations. The crosses
are the poles \eq{poles} and the vertical line of length (approximately) $2R$ is
$\gamma_R$.}
\end{figure}

To evaluate $Q(\xi)$ defined by \eq{Qdef} for a given kernel
function $K(t)$ we can use either the left or right D-contours
illustrated in Figure \ref{fig:contours}, taking the limit
$R\rightarrow \infty$ and setting $\gamma = \lim_{R\rightarrow
\infty} \gamma_R$. Using the right contour gives
\[ 
  Q(\xi) = \sum_{k=-\infty}^\infty
  \lap{K}(\pole_k)\, \symb(\xi,\pole_k\dt) -
  \lim_{R\rightarrow\infty} \frac{\dt}{2\pi i} \int_{C^+_R} \lap{K}(s)\,
  \left(\frac{\symb(\xi,s\dt)}{1-e^{s\dt}\,\xi}\right)\,  ds\,.
\]
The integral round $C^+_R$ does not necessarily vanish as $R
\rightarrow \infty$ since for some basis functions (including higher
order B-splines) the quantity\ \ $\ds{\frac{\symb(\xi,s\dt)}{1-e^{s\dt}\,\xi} = {\cal O}(e^{c s\dt})}$
as Re$(s) \rightarrow \infty$ for $c \ge 1$. We may also use the left contour when $\lap{K}(s)$
has simple poles at $s=\kappa_j$ with $\mathrm{Re}(\kappa_j) \le 0$
and obtain the analogous result
\[
  Q(\xi) = \dt \sum_j \lim_{s\rightarrow\kappa_j} \left(\frac{(s-\kappa_j)\,\symb(\xi,s \dt)}{1-e^{s\dt}\, \xi}\;
    \lap{K}(s)\right) -
  \lim_{R\rightarrow\infty} \frac{\dt}{2\pi i} \int_{C^-_R}
  \lap{K}(s)\, \left(\frac{\symb(\xi,s\dt)}{1-e^{s\dt}\,\xi}\right)\,  ds\,.
\]
The asymptotic behaviour of the integral $C^-_R$ as $R\rightarrow
\infty$ is determined primarily by $\lap{K}(s)$. The extension of
this left contour approach to poles with higher multiplicity is
straightforward.  We
illustrate the use of these formulae in Sec.~\ref{sec:bspline.stab} for various
kernels $K$ when the basis functions are B-splines.

\newsec{B-spline basis functions for \eq{vie}}
\label{sec:bspline}

We now illustrate the theoretical framework introduced in Section \ref{sec:spline} for basis functions $\phi_j$ which are B-splines on $[0,\infty)$.  We begin by listing some general properties
of B-splines which are needed in the subsequent analysis, and then examine the stability of the convolution spline approximation of \eq{vie} for different example kernels.  We also prove that the approximation given by \eq{CSscheme}
converges to the solution $u$ of \eq{vie} for general smooth $a$ and
$K$.  The convergence rate is at most second order, no
matter how high the polynomial degree, because quasi-interpolation
by the Schoenberg B-spline operator is at most $\oo(\dt^2)$
\cite{deboor}.  However a simple modification of the B-spline basis near $t=0$ can give higher order stable approximations of \eq{vie}, and this is analysed for the cubic case in Section~\ref{sec:modbs}.

\subsection{Notation and properties}
\label{sec:bspline.props}

We now look in detail at the approximation \eq{cspline} when 
the basis functions are (iso-geometric)
B-splines of polynomial degree $m$ based on the uniformly spaced nodes (or knots)
$t_j = j\, \dt$ for $j \ge 0$. It is necessary for the B-spline basis functions to have the sum to unity
property \eq{pou} in the whole interval $[0, \infty)$, and we introduce $m$ new knots $t_j = 0$
for $j = -m:-1$. The $m$th degree B-splines are $b^m_j(t)$ for $j
\ge -m$, and B-splines of degree $m > 0$ are recursively defined in
terms of those of lower degree as follows, using the convention that
$b^m_j(t) \equiv 0$ for $j < -m$.

\begin{defn}\cite{deboor}
When $m = 0$
\[
  b^0_j(t) = \left\{\begin{array}{cl}
    1 &\quad\mbox{if $t \in[t_j, t_{j+1})$ for $j \ge 0$ and}\\
    0 &\quad\mbox{otherwise.}
  \end{array}\right.
\]
If $m > 0$ then
\[
  b^m_j(t) = \left(\frac{t - t_j}{t_{j+m} - t_j}\right)\, b^{m-1}_j(t) + \left(\frac{t_{j+m+1} - t}{t_{j+m+1} - t_{j+1}}\right)\, b^{m-1}_{j+1}(t)
\]
where the convention is that $0/0$ is interpreted as 0.
\end{defn}

Throughout this section we shall use basis functions
\be\label{eq:phibs}
  \phi_j(t/\dt) = b^m_{j-m}(t)\quad \mbox{for $j \ge 0$}.
\ee
Note that the spline degree $m$ is also the translate parameter from \eq{translate}.

We make use of several B-spline properties in Section
\ref{sec:bspline} (see for example
standard references such as \cite{deboor,schumaker}), which we list here for convenience.
\bigskip

\textbf{B-spline properties} 
\begin{enumerate}
\item Compact support.\quad  $b^m_j(t) = 0$ outwith $[t_j, t_{j+m+1})$, and $b^m_j(t_j) = 0$ unless $j = -m$.
\label{bsprop:supp}

\item Translate property. \label{bsprop:trans}
If $j \ge 0$ then $b^m_j(t) = b^m(t/\dt-j)$, where the functions $b^m$ are defined recursively:
\[
  b^0(\tau) = \left\{\begin{array}{cl}
  1 &\ \mbox{if $\tau \in[0, 1)$,}\\
    0 &\ \mbox{otherwise}
  \end{array}\right.
  \quad\mbox{and if $m \ge 1$:}\quad
  b^m(\tau) = \frac{\tau}{m}\, b^{m-1}(\tau) + \frac{m+1-\tau}{m}\, b^{m-1}(\tau-1)\,.
\]
It follows that $\phi_j(\tau) = b^m(\tau + m - j)$ for $j \ge m$.

\item Sum to unity.\quad $\ds{\sum_{j=-m}^\infty b^m_j(t) = 1}$ for all $t \ge 0$.
\label{bsprop:pou}

\item Moments.
\[
  \int_{t_{j-m}}^{t_{j+1}} b_{j-m}^m(t)\, dt = \frac{t_{j+1} -
  t_{j-m}}{m+1}\quad \mbox{and}\quad \int_{t_{j-m}}^{t_{j+1}} t\, b_{j-m}^m(t)\, dt = \frac{t_{j+1} -
  t_{j-m}}{(m+1)(m+2)}\; \sum_{k=0}^{m+1} t_{j-m+k}\,.
\]
\label{bsprop:mom}

\item Shoenberg quasi-interpolation.  Suppose that $m \ge 1$ and set $\ds{t^m_{j} = \frac{\dt\,(m+j)\,(m+j+1)}{2 m}}$ for $j = -m:-1$ and $t^m_j = t_{j +(m+1)/2}$ for $j \ge 0$.  Then\ \
$\ds{\sum_{j=-m}^\infty t^m_j\, b^m_j(t) = t}$ when $t \ge 0$.
\label{bsprop:qi}
\end{enumerate}

It follows from properties \ref{bsprop:supp}, \ref{bsprop:pou} and
\ref{bsprop:qi} above that
 \be
  f(t) = \sum_{j=-m}^\infty f(t_j^m)\, b^m_j(t) +  \oo(\dt^2)
\label{eq:QIresult2} \ee for any $f \in C^2[0,\infty)$, and if $f
\in C^{p+1}[0,\infty)$ for $p \ge 2$ and $t \in [t_\ell,
t_{\ell+1})$ for some $\ell \ge 0$, then
\be
  f(t) - \sum_{j=\ell-m}^\ell f(t_j^m)\, b^m_j(t) =\sum_{k=2}^p
  \frac{f^{(k)}(t_\ell)}{k!}\, \left[ (t-t_\ell)^k - \sum_{j=\ell-m}^\ell
  \left(t_j^m - t_\ell\right)^k\, b^m_j(t)\right] + \oo(\dt^{p+1})\,.
  \label{eq:QIresult4}
 \ee

It follows from \ref{bsprop:supp} that the CQ weights are
 \be
  \q_j = \int_{t_{j-m}}^{t_{j+1}} K(t)\, b_{j-m}^m(t)\, dt\,.
\label{eq:defbw}
 \ee
The convergence analysis relies crucially on
knowing the values of the weights when $K$ is a constant, and
this follows immediately from \ref{bsprop:mom}: when $K \equiv 1$
the weights $\q_j$ of \eq{defbw} are given by
 \[
  \frac{\q_j}{\dt} = \left\{\begin{array}{ll}
    \frac{j+1}{m+1} & \quad\mbox{for $j = 0:m-1$}\\
    1 & \quad\mbox{if $j \ge m$\,.}
  \end{array}\right.
 \]

\subsection{Stability results for convolution B-splines}
\label{sec:bspline.stab}

We now use the theoretical framework introduced in Section \ref{sec:spline} to examine the stability of the convolution B-spline approximation of \eq{vie} for different example kernels which capture some of the important properties of TDBIE problems.
These are: $K(t)$ equal to a constant, a step function, and the highly oscillatory kernels $K(t) = J_0(\w t)$ or $\cos(\w t)$, where $\w$ can be of the order of $1/\dt$.  We use $\symb_m(\xi,s\dt)$ to denote the function defined by \eq{defgam} for the degree $m$ basis functions, and $\Q_m(\xi)$ to denote the coefficient Z-transform given by \eq{Qdef}.  The first few values of $\symb_m(\xi,s\dt)$ are listed in Table \ref{tab:gamma}; those for higher values of $m$ are more complicated, but are easily computed in a standard algebraic manipulation package.

In three of the cases $Q_m(\xi)$ is a rational function in $\xi$ and Theorem \ref{def:rootcond} can be used to determine stability.  The Bessel function case is more complicated, and stability is determined from the Z-transform inversion formula by bounding the coefficients $p_n$ of \eq{defpn} directly.  Note that this bound is independent of $n$, and so is a practically useful stability result, in contrast with the
(essentially) uncheckable hypotheses needed in \cite{DaDu2003}.

\begin{table}[ht]
\[
\begin{array}{|c|c|c|c|}
\hline
\mathrm{B\!\!-\!\!spline} & \symb_m(\xi,s) & \lim_{s\rightarrow 0} \symb_m(\xi,s) & \symb_m(e^{-s},s)\\
\mathrm{degree} & & &\\
\hline
     m=0 & s^{-1}(e^s-1) & 1 & s^{-1}(e^s-1)\\
\hline
     m=1 & s^{-2}\left[e^s(\xi s-\xi+1) + (\xi-s-1)\right] & (1+\xi)/2
           & s^{-2}(e^s-1)^2 e^{-s}\\
\hline
     m=2 & s^{-3}\left[e^{2s}(\xi^2-\xi)+ e^s(2-2\xi^2s+ \xi(s^2+2s-2)) + \right.& & \\
         &       \left. ((2s+3)\xi-s^2-2s-2-\xi^2)\right] & (1+\xi+\xi^2)/3
         & s^{-3}(e^s-1)^3 e^{-2s}\\
\hline
   m=3 & s^{-4} (e^s\xi -1)  \left[ s^3-3\,s^2 ( \xi-
1 ) +3\,s ( \xi-1 )  ( \xi-2 ) \right. & & \\
   &-\frac{1}{2}\, ( \xi
-1 )  ( 2\,\xi^2-9\,\xi+12 ) & & \\
 & +\left. \frac{1}{2} e^s ( \xi-1 )
   ( e^s\xi-6\,\xi+10 ) \right] &  (1+\xi+\xi^2+\xi^4)/4 & s^{-4}(e^s-1)^4 e^{-3s} \\ 
 & + s^{-4}e^s(\xi-1 ) ^4 & & \\
\hline
\end{array}
\]
\caption{\label{tab:gamma} The function $\symb_m(\xi,s)$ for $m=0:3$.  See text for details.}
\end{table}

\subsubsection{Constant kernel: $K(t)=1$, transform $\bar{K}(s)=1/s$} \label{sec:constker}

Integrating \eq{Qdef} round the left contour in Figure \ref{fig:contours}
gives
\be \label{eq:ker1}
    \Q_m(\xi) = \dt \lim_{s\rightarrow 0} \frac{\symb_m(\xi,s\dt)}{1-e^{s\dt} \xi}
     = \frac{\dt(1-\xi^{m+1})}{(m+1)(1-\xi)^2}\,.
\ee
The function $\Q_m$ has $m$ simple roots on the unit circle, and stability of the approximation then
follows from Theorem \ref{def:rootcond}.  (Note that stability also follows from the convergence result of Sec.\ \ref{sec:bspline.conv}.)

\subsubsection{Discontinuous step-function kernel: $K(t)=1$ for $t \in [0,L]$, otherwise $0$.}
\label{sec:BSstab.disc}

Discontinuous kernels can
arise in TDBIE problems, even when the scattering surface $\Gamma$ is smooth and closed.
Examples (in Laplace transformed representation) are given in \cite[Sec.\ 6.1]{BaLu2011} and \cite[Sec.\ 4.1]{SaVe2011a} describing time domain scattering where only the zeroth order harmonic
in space is excited on the surface of a sphere.  Similar, but more
complicated discontinuous kernels are described in \cite{SaVe2011a} for more general
scattering from spheres involving higher spatial harmonics.

We assume that the duration $L$ is
independent of $\dt$ and denote the integer part of $L/h$ by $\Md$,  i.e.\ when $\dt$ is sufficiently small,
$L = (\Md+r)\, \dt$ for integer $\Md > m$ and $r \in [0,1)$\,.
It is simplest to work with the explicit Z-transform formula \eq{defZtrans}
using the weights given in \eq{defbw}.  Results for $m = 0:3$ are summarised below.
\medskip

\textbf{Case} $\mathbf{m=0}$:\quad\quad\quad     $\ds{\frac{Q_0(\xi)}{\dt} = r\, \xi^\Md +  \sum_{n=0}^{\Md-1} \xi^n = \frac{1}{\xi-1}\left(r\, \xi^{\Md+1} + (1-r)\,\xi^\Md - 1\right)\,.}$

When $r\in(0,1)$ it can be shown that the $\Md$ roots $\xi_j$ of $Q_0$
satisfy $|\xi_j|>1$ for $j=1:\Md$, and when $r=0$ there are $\Md-1$ simple roots
$\xi_j = \exp(i 2\pi j/\Md)$ for $j=1:\Md-1$.  Hence Theorem \ref{def:rootcond} implies that the $m=0$ scheme is stable for all $L$.
\medskip

\textbf{Case} $\mathbf{m=1}$: We have
\[
  Q_1(\xi) = 
    \frac{\dt}{2}\, \frac{1}{1-\xi}\left(1+\xi - \xi^\Md g\right),
		\quad \mathrm{with\ }  g(r,\xi) = \xi+(1-r+r\xi)^2 \quad \mathrm{and\ } r \in [0,1).
\]
Using the definition \eq{defpn} and formal power series expansion for small $\xi$ gives
\[
        P_1(\xi) = \frac{\dt}{2 Q_1(\xi)} = \frac{1-\xi}{1+\xi - \xi^M g(r,\xi)}
				         = \frac{1-\xi}{1+\xi}\left(1 + \xi^\Md \frac{g(r,\xi)}{1+\xi} + \xi^{2\Md} \frac{g(r,\xi)^2}{(1+\xi)^2} + \ldots \right) = \sum_{n=0}^\infty p_n \xi^n
\]
where the $p_n$ are the stability coefficients.
The finite duration of the kernel has no impact on the $p_n$ until $n \ge \Md$,
and it is relatively easy to show that in the first time interval after that we have
\[
          p_n = 2 (-1)^n + (-1)^{n+\Md}(2-4r^2-8r(1-r)(n-\Md)),
					\quad \Md+2 \le n \le 2\Md-1.
\]
When $r \in (0,1)$ we have $p_n = \oo(n)=\oo(\dt^{-1})$, and the scheme is unstable by Definition \ref{def:stability}. 
In subsequent time periods (measured in terms of the duration $L$) it can be shown that the instability gets worse and 
$p_n = \oo(n^{\floor{t_n/L}})$.
In the special case when $r=0$ (or equivalently $L = M\dt$) this scheme is stable for this problem, but it may not be possible to satisfy similar integer multiple of $\dt$ conditions in a more complicated problem, for example when there are two or more time periods whose ratios are irrational.

\textbf{Case} $\mathbf{m=2,3}$: A similar argument can be used to show that these two schemes are unstable for all $r \in [0,1]$, and that in each case $p_n = \oo(n^{\floor{t_n/L}})$.
Note however that the modified cubic spline basis functions described in Section \ref{sec:modbs} give completely stable results for this kernel.

\subsubsection{$K(t) = J_0(\w t)$, transform $\bar{K}(s) = 1/\sqrt{s^2+\w^2}$}
\label{sec:BSstab.bessel}

This is the kernel function that arises when considering TDBIE scattering from the flat surface $\R^2$, where $\w$ can be of the order of $1/\dt$ (i.e.\ $\dt\w$ is bounded as $\dt \rightarrow 0$, but does not necessarily tend to zero).
Its Laplace transform has a branch cut between the values
$s = \pm i \w$, and the Z-transform $Q_m(\xi)$ of the weights is not a rational function.
We can still establish stability directly for the
impulse response sequence $\{p_n\}$ defined in \eq{defpn}  using a change of variable in
the Z-transform inversion formula \cite[eq.\ 37.7]{doetsch}
to get
\be \label{eq:pnbyinv}
         p_n = \frac{e^{n\dt \sigma}q_0}{2\pi} \int_{-\pi}^{\pi} \frac{e^{i n \y}}{Q_m(e^{-\x-i\y})}\; d\y\,,
\ee
where we have set $\xi = e^{-s\dt}$ with $s = \sigma + i \eta$ and $\sigma > 0$
and then changed to scaled variables $\x=\sigma\dt$ and $\y = \eta\dt$.  This yields the bound
\be \label{eq:pnbnd}
         |p_n| \le \frac{e^{\sigma T}}{2\pi} \int_{-\pi}^{\pi} \frac{|q_0|\, d\y}{|Q_m(e^{-\x-i\y})|}
\ee
when $t_n \le T$, which
holds for any fixed $\sigma > 0$ when the singularities of the integrand are to the left of $\x$. Note that this bound is independent of $n$, and the scheme is stable at a given frequency $\w$ if the integral term in \eq{pnbyinv} remains bounded as $\dt \rightarrow 0$.  This can be demonstrated using the right contour in Fig.~\ref{fig:contours} to calculate $Q_m(e^{-s\dt})$ but it is more straightforward to work directly with \eq{ZtransQ}.

It follows from standard properties of the B-spline basis \eq{phibs} that
\[
  q_0 = \int_0^\dt \left(1- \frac{t}{\dt}\right)^m J_0(\w\, t)\, dt = \frac{m!\,\intK{m-1}(\dt)}{\dt^m}\,
\]
where functions $\intK{k}(t)$ are recursively defined by
\[ 
     K^{(0)}(t)= J_0(\w\,t), \quad  \intK{k-1}(t) = \int_0^t  \intK{k}(t')dt' \quad
     \mathrm{for\ } k=0,1,\ldots.
\]
Note that
\be \label{eq:intKbnd}
    \intK{m-1}(t) \le t^{m+1}/(m+1)!
\ee
for all $t\ge 0$. 

Properties of the B-spline basis functions can also be exploited to write \eq{ZtransQ} as
\be \label{eq:Qmhardway}
   Q_m(\xi) = \frac{(1-\xi)^{m+1}}{\xi\, \dt^m}\; \Ztrans{\intK{m-1}}(\xi) +  C_m(\xi)
\ee
where the correction terms are $C_0 = 0, \quad C_1 = 0$,
\[
        C_2(\xi) = (1-\xi) \frac{\intK{3}(\dt)}{\dt^2},
     \quad
      C_3(\xi) = (1-\xi)(5-3\xi) \frac{\intK{4}(\dt)}{\dt^3}
                     + \xi(1-\xi)\frac{\intK{4}(2\dt)}{2\, \dt^3}\;.
\]
The presence of these terms is because for $m=0:1$ the basis functions are pure translates, while for $m \ge 2$, there are  different shaped basis functions at the start.
The function $\intK{k}$ has Laplace transform
\[
  \lap{\intK{k}}(s) = \frac{1}{s^k \sqrt{s^2+\w^2}}
\]
and it follows from the Poisson sum formula relating $Z$ and Laplace transforms that
\[
  \Ztrans{\intK{k}}(e^{-s\dt}) = \frac{1}{\dt} \sum_{j\in\Z} \frac{1}{s_j^k \sqrt{s_j^2 + \w^2}} := \dt^k \sum_{j \in \Z} f_j^{k-1}\,
\]
where $s_j = s + i 2\pi j/\dt$,
and we use this expression in \eq{Qmhardway} in order to bound the integral term in \eq{pnbnd}.

When $m=0$ it is possible to obtain an analytic bound when $\w \le \pi/\dt$, and a careful numerical approximation of the integral \eq{pnbnd} indicates that the $p_n$ are bounded for $\w$ up to (at least) $20\,\pi/\dt$.  The situation is more complicated for $m=1:3$, and in these cases we give numerical bounds.
\medskip


\textbf{Case} $\mathbf{m=0}$:
From \eq{intKbnd} and \eq{Qmhardway}
\[
         \frac{|q_0|}{|Q_0(e^{-\x-i\y})|}
         = \frac{1}{|e^{x+iy} -1|\,\left|\sum_{k\in\Z} f_k^0\right|}
\]
and
\[
  \left|\sum_{k\in\Z} f_k^0 \right|
  = |f_0^0| \left(\left(1+\sum_{k\in\Z/0}\Re(f_k^0/f_0^0)\right)^2 + \left(\sum_{k\in\Z/0}\Im(f_k^0/f_0^0)\right)^2\right)^{1/2}
  \ge |f_0^0| \left|1+\sum_{k\in\Z/0}\Re(f_k^0/f_0^0)\right|\,.
\]
It can be shown that
\[
       1+\min_y\sum_{k\in\Z/0}\Re\left(f_k^0/f_0^0\right)
       =
       1+\sum_{k\in\Z/0}\Re\left(f_k^0/f_0^0\right)|_{y=0}
       > \frac{2}{3}
\]
when $0 \le \x \le 1$, $0 \le \w\dt \le \pi$ and $|y| \le \pi$.
In this case
\[
  \frac{|q_0|}{|Q_0(e^{-\x-i\y})|} \le
  \frac{3\, \sqrt{\x^2+ 2\pi^2}}{2}\; \frac{\sqrt{\x^2+\y^2}}{|e^{x+iy}-1|} \le
  \frac{3\, \sqrt{\x^2+ 2\pi^2}}{2}\; \frac{\pi\, e^{-x/2}}{2}
\]
using Jordan's inequality.
Together with \eq{pnbnd} this proves that the
scheme is stable in the sense of
Definition \ref{def:stability} for frequency $\w$
in the contiguous interval $0 \le \w\dt \le \pi$.
Numerical evaluation of the right hand side of \eq{pnbnd} indicates that the bound is
\[
    |p_n| \le 1.3\, e^{\sigma T}
\]
when $\dt$ is sufficiently small (so that $\x < 0.1$) and $0 \le \w\dt \le 20\pi$.
Further numerical tests computing $p_n$ directly from \eq{defpn} for a finite number of steps $n \le 2500$ and the same range of values of $\w\dt$ indicate that $|p_n| \le 1$, consistent with the
estimate above.
There is no indication of instability at any value of $\w\dt$ tested
and we speculate that this scheme is stable for all $\w$.
\medskip

\textbf{Case} $\mathbf{m=1}$:
Finding an explicit bound for the integral in \eq{pnbnd}
is significantly more complicated and perhaps even intractible here
so we only consider its direct numerical evaluation
over a range of frequencies and values of $\x = \dt\sigma$ close to 0.
However there is an extra complication because $q_0/Q_1(z)$ has a
pole at $z=-1$.
This is most obvious when we set $\w=0$ and get
$q_0/Q_1(z) = (1-z)/(1+z)$ from \eq{ker1}.
When $\w\ne 0$ there is no simple formula, but it is still possible to show
by direct evaluation of the summation formula for $Q_1(e^{-\x-i\y})$
that the pole remains when $0<\w\dt<\pi$.
The pole renders the bound in \eq{pnbnd} less useful since
\[
     |q_0|\int_{-\pi}^{\pi} \frac{dy}{|Q(e^{-\x-i\y})|} = \oo(\log(1/\x)) \rightarrow \infty
     \quad \mathrm{as\ } \x \rightarrow 0
\]
(where $\x = \dt\sigma$) and hence $|p_n| \le (C_0 + C_1\log(1/\dt)) e^{\sigma T}$
as $\dt \rightarrow 0$, which
does not satisfy the stability requirement of Def.~\ref{def:stability}.
Fortunately the singularity can be removed by writing
\[
     \frac{q_0}{Q_1(\xi)} = \frac{a}{(1+\xi)} + \Delta P(\xi),
     \quad
     \mathrm{where\ }
     a = \lim_{\xi \rightarrow -1} \frac{q_0(1+\xi)}{Q_1(\xi)}
\]
so that $\Delta P(\xi)$ is bounded as $\xi \rightarrow -1$.
The sequence $\{p_n\}$ can then be written as $p_n = a\,(-1)^n + \Delta p_n$
where $\Delta p_n$ is bounded in the same way as \eq{pnbnd}:
\be \label{eq:deltapnbnd}
       |\Delta p_n| \le \frac{1}{2\pi} \int_{-\pi}^{\pi} |\Delta P(\x+i\y)|\, dy.
\ee
Numerical evaluation of the integral over frequencies $0 \le \w\dt < \pi$ indicates that
$|\Delta p_n| \le 1.1$ and $0 < a \le 2$
for $n\dt \le T$ and $0 < \x \le 1/10$.
Combining this with direct evaluation of \eq{pnbnd} when $\w\dt \in [\pi,20\pi]$ and there is not a pole indicates that
\[
        |p_n| \le C e^{\sigma T}
        \quad \mathrm{where} \quad C = \left\{\begin{array}{ll}
                  3.1\,,  &\ \w\dt \in [0,\pi)\\
                  1.1\,,  &\ \w\dt \in [\pi,20\pi]
                  \end{array}\right.
\]
for $n\dt \le T$ and $0 < \x \le 1/10$ satisfying the stability Def.~\ref{def:stability}.
Further numerical tests computing $p_n$ directly from \eq{defpn} for a finite number of steps $n \le 2500$ and the same range of values of $\w\dt$ indicate that $|p_n| \le 2$ for $\w\dt \in [0,0.7\pi)$ and $|p_n| \le 1$ for $\w\dt \in [0.7\pi,20\pi]$, consistent with the
estimate above.
Again we speculate that this scheme is stable for all $\w$.
\medskip

\textbf{Case} $\mathbf{m=2}$:
The function $q_0/Q_2(\xi)$ appears to have two poles on the unit circle
when $\w\dt \in [0,L)$ where $L \approx 2.55$, symmetrically located at $\xi = e^{\pm i \mu(\w\dt)}$. In the simple case $\w=0$, \eq{ker1} gives  $Q_2(\xi) = q_0\,(1+\xi+\xi^2)/(1-\xi)\,$,
and so $\mu(0) = 2\pi/3$.
Numerical evidence indicates that $\mu(\w\dt)> \w\dt$ and that $\mu$ increases until the two poles meet where $\mu(L)=\pi$.
At that point stability in the sense of Def.~\ref{def:stability} breaks down since there does not appear to be any compensating factor in the numerator to reduce the order of this double singularity.

We locate the poles numerically, and remove them from the integrand $q_0/Q_2(\xi)$ in a similar way to the previous case. The simplest form that captures the main features of the behaviour is
\[
   \frac{q_0}{Q_2(\xi)} = \frac{a(1-\xi)}{\xi^2-2\xi\cos\mu+1} + \Delta P(\xi),
\]
so that by direct inversion of the Z transform
\[
    p_n = a (\cos(n\mu) - \sin(n\mu) \tan(\mu/2)) + \Delta p_n.
\]
For $0 \le \w\dt < L \approx 2.55$ we find that
$0 < a \le 1$ and from \eq{deltapnbnd} that $|\Delta p_n| \le 0.8$, giving
\[
     |p_n| \le 0.8 + \sec(\mu(\w\dt)/2)
\]
for $n\dt \le T$ when $0 < \x \le 1/10$.
This satisfies Def.~\ref{def:stability} since $2\pi/3 \le \mu(\w\dt) < \pi$,
but since $\sec(\mu/2) \rightarrow \infty$ as $\mu \rightarrow \pi$, the possibility for instability is clear.
Further numerical tests computing $p_n$ directly from \eq{defpn} for a finite number of steps show very close and consistent agreement with this bound on $|p_n|$, with instability appearing as predicted at $\w\dt = L \approx 2.55$ -- i.e.\ there is
a contiguous interval of stability $\w\dt \in [0,L)$ with $L \approx 2.55$.
\medskip

\textbf{Case} $\mathbf{m=3}$:
Not surprisingly this case is more complicated still.
When $\w=0$ the three poles of $q_0/Q_3(\xi)$ are on the unit circle at $\xi = -1, e^{\pm i\pi/2}$.
However, when $\w\dt>0$ increases, the real-valued pole at $\xi=-1$ moves (harmlessly) outside the unit circle
while the other complex conjugate pair moves inside causing instability.
Numerical tests computing $p_n$ directly from \eq{defpn} for fixed vlaues of $\w\dt$
show behaviour consistent with
this: we see apparent stability for larger values of $\dt$ which disappears as
$\dt \rightarrow 0$, i.e.\ this
scheme is stable only when $\w$ is fixed (so that $\w\dt \rightarrow 0$).
\medskip

Similar results can be proved for the (more straightforward) oscillatory kernel $K(t) = \cos \w t$, as summarised below.
\bi
\item $\mathbf{m=0:}$  the scheme is stable at any frequency $\w$ for which $\w\dt \in [0, \pi)$\,;
\item $\mathbf{m=1:}$ the scheme is stable at any frequency $\w$ for which $\w\dt \in [0, 2\pi)$\,;
\item $\mathbf{m=2:}$ the scheme is stable at any frequency $\w$ for which $\w\dt \in [0, \theta)$, where $\theta = 1.9747\dots$\,;
\item $\mathbf{m=3:}$ there is no $\oo(1)$ interval of stability for $\w\dt$, but the scheme is stable for bounded $\w$.
\ei

The stability results for highly oscillatory kernels are illustrated in Figure~\ref{fig:stabJ0andcos}.  The plots show
$\max_n |p_n|$ for $n=0:2500$ for the
B-spline schemes with $m = 0:3$ applied to the kernels $K(t) = J_0(\w t)$ (left plot) and $K(t) = \cos \w t$ (right plot). Over the
range $\omega \in [0,\pi/\dt]$ shown, the general
stability behaviour for these two kernels is similar.
In particular the left plot illustrates the stability when $m=0,1$, while
scheme $m=2$ is stable for $\omega\dt \in [0,L)$ with $L \approx 2.55$.
On the right plot, scheme $m=0$ is stable except at $\omega \dt = \pi$,
scheme $m=1$ is stable and $m=2$ scheme is stable for  $\omega\dt \in
[0,L)$ with $L \approx 1.97$.
On both plots the $m=3$ scheme is clearly unstable when $\omega =
\oo(1/\dt)$.

\begin{figure}
\begin{center}
\includegraphics[width=0.8\textwidth]{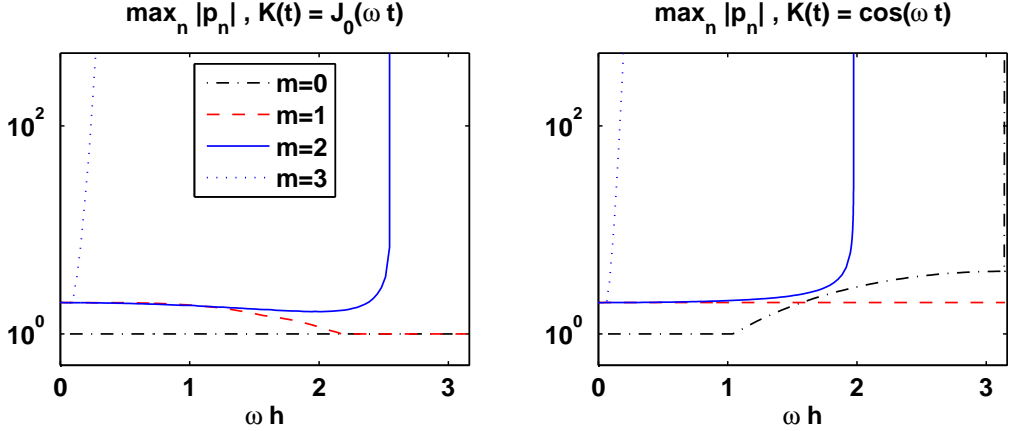}
\end{center}
\caption{\label{fig:stabJ0andcos} Plots of $\max \left\{|p_n|: 0 \le n \le 2500\right\}$ against $\w \dt$ for the
B-spline schemes with $m = 0:3$ applied to the highly oscillatory kernels $K(t) = J_0(\w t)$ (left plot) and $K(t) = \cos \w t$ (right plot).  See text for more details.}
\end{figure}

\subsection{Convergence results for \eq{CSscheme}}
\label{sec:bspline.conv}

Formal convergence of a method when applied to a smooth VIE problem is certainly a necessary condition for it to behave well (for VIEs or TDBIEs), and we now prove that the collocation spline approximation
\eq{CSscheme} with B-spline basis functions \eq{phibs}
converges to the solution $u$ of \eq{vie} for general smooth $a$ and
$K$.  The analysis proceeds by considering the case $K \equiv 1$ and then using Taylor
expansion to show that the same result also holds for smooth $K$ with $K(0) = 1$ when $\dt$ is
small enough (see e.g.\ \cite{brunnerbook}). Thus it does not apply
to the important case of an oscillatory kernel
where the oscillation frequency $\w = \oo(1/\dt)$, whose stability was analysed above for the Bessel function and cosine kernels.

When $m=0$ the approximation \eq{cspline} is the same as
using piecewise constant collocation (at the interval endpoints),
and this has been fully analysed (see e.g.\ \cite{brunnerbook} for
details).  Here we assume that $m \ge 1$ (note that this includes the well-known case of piecewise linear approximations of \eq{vie}), and show that convergence is always second order, no
matter how high the polynomial degree, because quasi-interpolation
by the Schoenberg B-spline operator is at most $\oo(\dt^2)$ \cite{deboor}.  This is in marked contrast to
discontinuous
polynomial collocation or Galerkin approximations of \eq{vie} which
converge at optimal order
\cite{brunnerbook,dgvie,dgvie2}.
However a simple modification of the B-spline basis near $t=0$ can give higher order stable approximations of \eq{vie}, as illustrated when $m=3$ in Sec.~\ref{sec:modbs}.

The \textbf{approximation error} $e_n(t)$ for $n>0$, $t \ge 0$ is
 \be
  e_n(t) = u(t_n-t) - \sum_{j=0}^n \vv_{n-j}\, b^m_{j-m}(t)\,,
\label{eq:approxerr}
\ee
where the coefficients $\vv_j$ satisfy
\be
  a(t_n) = \sum_{j=0}^n \q_j\, \vv_{n-j}
\label{eq:CSdefs}
 \ee
for weights $\q_j$ as defined in \eq{defbw}.
Note that $\vv_0 = 0$ (because $a(0) = 0$) and so the sums above can
be taken from $j=0$ to $n-1$, and it then follows from Property
\ref{bsprop:supp} that $e_n(t) = 0$ for $t \ge t_n$ and each weight can be written as $\q_j = \int_0^{t_n} K(t)\, b^m_{j-m}(t)\, dt$\,.  Hence, multiplying \eq{approxerr} by $K(t)$ and integrating gives
\[
  \int_0^{t_n} K(t)\, e_n(t)\, dt = \int_0^{t_n} K(t)\, u(t_n-t)\, dt
  - \sum_{j=0}^{n-1} \q_j\, \vv_{n-j} = 0\,,
\]
by \eq{vie} and \eq{CSdefs}, i.e.\ $e_n$ is orthogonal to $K$ on
$(0,t_n)$.  The formal convergence result is as follows.

\begin{theorem}
Suppose that $m \ge 1$ and the conditions \eq{smoothness} and
\eq{zerobc} hold for $d \ge 4$. Then
 \be
  |e_n(t)| \le C\, \dt^2
\label{eq:errbd}
 \ee
for $t  \in [t_m, T]$, for some $C$ independent of $n$ and $\dt$.  If $m = 1$ then \eq{errbd} holds for $t  \in [0, T]$.
\label{thm:errbd}
\end{theorem}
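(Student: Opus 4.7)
I would follow the standard two-step reduction: prove the result first for the constant kernel $K \equiv 1$, and then extend to general smooth $K$ with $K(0) = 1$ by a Taylor expansion of $K$ about $t = 0$ together with a discrete Gr\"onwall argument (the route indicated in the paragraph preceding the theorem; see e.g.\ \cite{brunnerbook}). For $K \equiv 1$ the weights of \eq{defbw} satisfy $\q_j/\dt = (j+1)/(m+1)$ for $j<m$ and $\q_j/\dt = 1$ otherwise, and the exact solution is $u = a'$.

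Differencing \eq{CSscheme} at $t_n$ and $t_{n-1}$ makes the tail of the convolution telescope, leaving the compact $(m+1)$-step identity
\[
  \frac{1}{m+1}\sum_{k=0}^m \vv_{n-k} \;=\; \frac{a(t_n)-a(t_{n-1})}{\dt}\qquad \text{for } n\ge m+1,
\]
which can be verified term-by-term from the explicit weights. I would then introduce the ideal sequence $\hat u_j := u(t_j + \dt(m-1)/2)$, chosen so that for $j\ge m$ (the translate regime) it coincides with the Schoenberg quasi-interpolation value $u(t_n - t^m_{j-m})$ attached to the basis function $b^m_{j-m}$ by property~\ref{bsprop:qi}. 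Substituting $\vv_j = \hat u_j + E_j$ into the identity and Taylor expanding $u$ about the symmetry point $t_n - \dt/2$ of the stencil $\{t_{n-k} + \dt(m-1)/2\}_{k=0}^m$ annihilates the odd moments and gives
\[
  \sum_{k=0}^m E_{n-k} \;=\; \oo(\dt^2),
\]
with a forcing that is a smooth, uniformly bounded function of $t_n$. The characteristic polynomial $(\xi^{m+1}-1)/(\xi-1)$ of this recursion has simple roots on the unit circle (the nontrivial $(m+1)$-th roots of unity), so it is zero-stable; the initial errors $E_0,\dots,E_m$ are $\oo(\dt^{d+1})$ by \eq{zerobc} and cause no trouble. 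Since the forcing does not resonate with any oscillatory mode, a generating-function / summation-by-parts argument produces the bound $|E_n| \le C\dt^2$ uniformly for $t_n \le T$.

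For $t \in [t_m, t_n]$, only translate basis functions ($j\ge m$) are active in \eq{approxerr} by property~\ref{bsprop:supp}, so I would split
\[
  e_n(t) \;=\; \sum_{j\ge m} E_{n-j}\, b^m_{j-m}(t) \;+\; \Bigl[u(t_n - t) - \sum_{j\ge m} u(t_n - t^m_{j-m})\, b^m_{j-m}(t)\Bigr],
\]
where the first bracket is $\oo(\dt^2)$ from the bound on $E_n$ together with the sum-to-unity property~\ref{bsprop:pou}, and the second is $\oo(\dt^2)$ by the quasi-interpolation estimate \eq{QIresult2}. When $m=1$ the ``boundary'' basis function $b^1_{-1}$ already uses the quasi-interpolation node $t^1_{-1}=0$, which matches $\hat u_n = u(t_n)$ exactly, so the split extends to the whole interval $[0,T]$ without restriction, giving the stronger second statement. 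The main obstacle is the non-resonance step: a naive stability estimate gives $|E_n| \le Cn\,\dt^2 = \oo(\dt)$, losing an order, and one has to exploit the smoothness of the forcing in $n$ (via Abel summation, or by reading off residues at the simple poles of $(1-\xi)/(1-\xi^{m+1})$ on $|\xi|=1$) to recover the missing power of $\dt$.
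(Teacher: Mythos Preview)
Your approach is correct and shares the paper's core idea (reduce to the coefficient errors $\eps_k = u(t_{k+(m-1)/2}) - v_k$, exploit the Schoenberg quasi-interpolation nodes, and arrive at a compact recursion whose characteristic polynomial is cyclotomic), but the execution differs in one instructive way. You take a \emph{first} difference of \eq{CSscheme} to obtain $\frac{1}{m+1}\sum_{k=0}^m E_{n-k} = r_n$ with $r_n = \oo(\dt^2)$; since $\xi = 1$ is \emph{not} a root of $1+\xi+\dots+\xi^m$, you then need the non-resonance/Abel-summation step you flag as the main obstacle to avoid losing a power of $\dt$. The paper instead takes the \emph{second} central difference of the residual equation \eq{defRn}: this yields the recursion \eq{epsrec} whose circulant companion matrix $M$ has eigenvalues equal to \emph{all} $(m+1)$-th roots of unity (so $\xi = 1$ now \emph{is} a mode), but the forcing drops to $\gamma_n = \oo(\dt^3)$. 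The extra $\dt$ gained from the second difference exactly compensates the factor $n \sim T/\dt$ lost to the $\xi = 1$ mode, and a direct Gr\"onwall bound via Ortega's norm with $\|M\|_* = \rho(M) = 1$ gives $|\eps_k| \le C\dt^2$ without any smoothness-of-forcing argument. The paper also folds the general-$K$ perturbation into the same recursion in one pass, via the Taylor expansion \eq{wvalvar} of the weights, rather than treating $K \equiv 1$ separately and appending a second Gr\"onwall step; this buys a cleaner proof at the cost of slightly heavier bookkeeping in \eq{epsrec}.
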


\textbf{Remarks:}
\bi
\item The $m = 1$ case has been fully analysed \cite{brunnerbook} and is just included for completeness.

\item The restriction to second order convergence for $m > 1$ is a
fundamental aspect of quasi-interpolation by classical B-splines and not an artefact
of the proof, and is illustrated in Figure \ref{fig:convfig} when \eq{zerobc} holds with $d=4$.

\item Equation \eq{errbd} trivially holds for $t \ge t_n$ (because
$e_n(t) = 0$), and so it is enough to prove the result for $t \in [t_m, t_n)$ when $m > 1$, where $n \le T/\dt$.
\ei

\begin{figure}
\begin{center}
\includegraphics[width=0.8\textwidth]{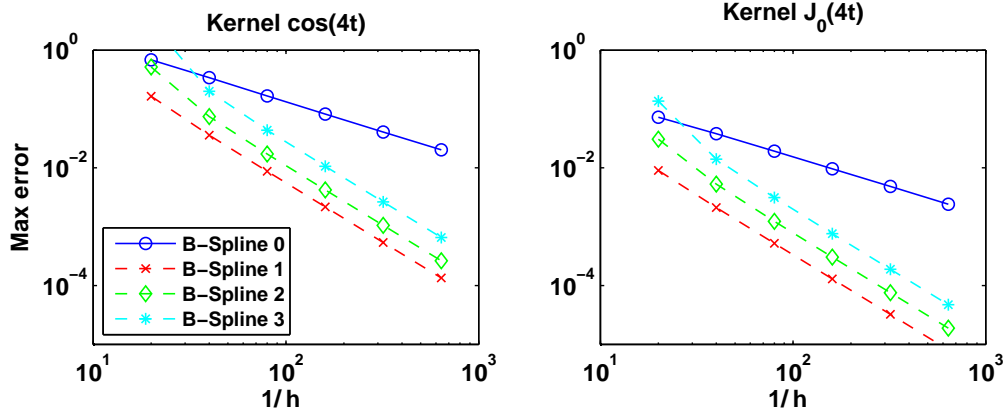}
\end{center}
\caption{\label{fig:convfig} Convergence results for the approximation of \eq{vie}
for two smooth kernel functions with maximum time $T=10$ and
$a(t) = t^6\exp(-50(t-1/2)^2)$.  Convergence rates of
$\oo(\dt^2)$ for splines of degree $m\ge 1$ and $\oo(\dt)$ for $m=0$ are clear.  Stability results for the highly oscillatory kernels $\cos \w t$ and $J_0(\w t)$ where the frequency $\w$ can be $\oo(1/\dt)$ are given in Section \ref{sec:BSstab.bessel} and Figure \ref{fig:stabJ0andcos}. }
\end{figure}

\begin{proof}
We first express $e_n(t)$ in terms of coefficients
$\eps_k := u(t_{k+(m-1)/2}) - \vv_k\,.$
Substituting the quasi-interpolation result
\eq{QIresult2} with $f(t) = u(t_n - t)$ in the definition \eq{approxerr} of
$e_n(t)$ gives
\[
  e_n(t) = \sum_{j=m}^n \eps_{n-j}\, b^m_{j-m}(t) +
  \sum_{j=n+1}^{n+m} u(t_n - t^m_{j-m})\, b_{j-m}^m(t) +
  \oo(\dt^2)\,,
\]
where we have used $b^m_{j-m}(t) = 0$ for $j < m$ and $j > n+m$.  It follows from
the assumptions \eq{smoothness} and \eq{zerobc} that $u(c\dt) = \oo(\dt^{d})$
for any constant $c$.  This implies that the second sum term in the
previous equation is $\oo(\dt^{d})$, and hence yields
\[
  e_n(t) = \sum_{j=m}^n \eps_{n-j}\, b^m_{j-m}(t) + \oo(\dt^2)
\]
for $t \in [t_m, t_n)$ with $t_n \le T$.  Because there are at most $m+1$ nonzero terms in this sum
for any $t$, it is sufficient to show that there exists a
constant $C$ independent of $\dt$ such that
 \be
  |\eps_j| \le
     C \dt^{2} \quad\mbox{for all $j \le T/\dt$.}
  \label{eq:epsbd}
 \ee

To prove \eq{epsbd} note that it follows from \eq{CSdefs} that $\ds{\sum_{j=0}^n \q_j\, v_{n-j} =
\int_0^{t_n} K(t)\, u(t_n - t)\, dt}$ and so
 \be
  \sum_{j=0}^n \frac{\q_j}{\dt}\; \eps_{n-j} =
  \sum_{j=0}^n \frac{\q_j}{\dt}\; u(t_{n-j+(m-1)/2})  -
  \frac{1}{\dt} \int_0^{t_n} K(t)\, u(t_n - t)\, dt := R_n\,.
\label{eq:defRn}
 \ee
If $\dt$ is sufficiently small, then expanding $K(t)$ and using
\ref{bsprop:mom}  gives
 \be
   \frac{\q_j}{\dt} = \left\{\begin{array}{ll}
    \ds{\frac{j+1}{m+1} + \dt\, K'(0)\,\frac{(j+1)^2(j+2)}{2(m+1)(m+2)} + \oo(\dt^2)}
      & \quad\mbox{for $j = 0:m-1$}\\
    K(t_{j-m}) + \frac{1}{2}\, \dt\,(m+1)\, K'(t_{j-m}) + \oo(\dt^2) & \quad\mbox{if $j \ge m$\,.}
  \end{array}\right.
\label{eq:wvalvar}
 \ee
It then follows from the quasi-interpolation result \eq{QIresult4}
with $p=3$ that when $n \ge m$,
\[
  \dt\, R_n = \sum_{\ell = 0}^{n-1} \sum_{k=2}^3 \eta_\ell^k\,
  u^{(k)}(t_{n-\ell}) + \oo(\dt^4)\ \ \mbox{for}\ \
\eta_\ell^k = \frac{(-1)^{k+1}}{k!}\, \int_{t_\ell}^{t_{\ell+1}} \!\!K(t)\, \left\{
  (t-t_\ell)^k -  \sum_{j=\ell}^{\ell+m} (t_{j-m}-t_\ell)^k\,  b^m_{j-m}(t) \right\}\,
  dt\,.
\]
It is then straightforward to show $R_{n+1} - 2\, R_n + R_{n-1} = \oo(\dt^3)$
and after some manipulation using \eq{wvalvar} the second central
difference of \eq{defRn} can be written as
 \be
  \left(1 + \dt\, \mu_o\right)\, \eps_{n+m+1} =
  \eps_n + \dt\, \sum_{\ell = 0}^m \mu_{\ell + 1}\,
  \eps_{n-\ell+m} + \dt^2 \sum_{\ell=m+1}^n \mu_{\ell+1}\,
  \eps_{n-\ell+m} + \gamma_n\,,
\label{eq:epsrec}
 \ee
where $\gamma_n = \oo(\dt^3)$ and all the $\mu_\ell$ are bounded.
This can be written as a one-step recurrence for the vector
$\ul{\delta}^n \in \R^{m+1}$ with components $\delta_j^n \equiv
\eps_{n+j}$ for $j = 0:m$.  The recurrence is $\delta_j^{n+1} =
\delta_{j+1}^n$ for $j = 0:m-1$ with $\delta_m^{n+1}$ given by
\eq{epsrec}, which gives the matrix--vector system
 \be
  \ul{\delta}^{n+1} = \left(M + \dt\, W_0\right)\, \ul{\delta}^n +
  \dt^2 \sum_{\ell=0}^{n-1-m} W_{n-\ell}\; \ul{\delta}^{m+\ell} +
  \gamma_n\, \ul{e}^m
\label{eq:vecdel}
 \ee
where $\ul{e}^m = (0,\dots,0,1)^T$, each matrix $W_\ell$ is bounded
and $M \in \R^{(m+1)\times(m+1)}$ is the circulant matrix whose only
nonzero components are $M_{m,0} = M_{j,j+1} = 1$ for $j=0:m-1$.
The eigenvalues of $M$ are the $(m+1)-$th roots of unity, and are
hence distinct.  Following Brunner \cite{Br1978} we note that $M$
belongs to Ortega's \cite[\S1.3]{ortega} Class M, and so there is a
vector norm $\|\cdot\|_*$ on $\R^{m+1}$ for which the induced matrix
norm satisfies $\|M\|_* = \rho(M) = 1$. Taking this norm of
\eq{vecdel} then implies that there is a constant $C$ such that
\[
  \|\ul{\delta}^{n+1}\|_* \le (1 + C\,\dt)\, \|\ul{\delta}^n\|_* + C\,
  \dt^2\, \sum_{\ell=0}^{n+1-m} \| \ul{\delta}^{m+\ell}\|_* +C\,\dt^{3}\,
\]
and the top bound of \eq{epsbd} gives $\|\ul{\delta}^0\|_* \le C\,
\dt^{d}$.  Standard arguments can then be used to show that
$\|\ul{\delta}^n\|_* \le \nu_n$ where $\nu_0 = C\, \dt^{d}$ and
\be\label{eq:nudiff}
  \nu_{n+1} = (1 + C\,\dt)\, \nu_n + C\,\dt^2\, \sum_{\ell=0}^{n}
  \nu_\ell + C\, \dt^3 \quad \mbox{for $n \ge 0$}.
\ee
This has the solution $\nu_n = A_+\, \lambda_+^n + A_-\,
\lambda_-^n$ where $\lambda_\pm = 1 + \oo(\dt)$ and $A_\pm =
\oo(\dt^2)$.  Hence $\nu_n \le C_1\, \dt^2\, e^{C_2\, T}$ for $n \le
T/\dt$, which concludes the proof of \eq{epsbd} and hence \eq{errbd}.
\end{proof}

\newsec{Modified cubic spline basis functions for \eq{vie}}
\label{sec:modbs}

The results of the previous section illustrate that the convolution spline framework can be used to derive new VIE approximations and prove their stability in cases not covered by standard convergence analysis (such as discontinuous or highly oscillatory kernels), but the restriction to second order convergence for the B-spline basis \eq{phibs} is not competitive with RK-based CQ methods \cite{Ba2010,BaLu2011,BaLuMe2011}.
We now show how a slight modification of the $m=3$ B-spline basis near $t=0$ can yield methods which are more accurate and have better stability properties than \eq{phibs}.

\subsection{Derivation}
\label{sec:modbs.deriv}

It is simpler to define the modified basis functions in terms of B-splines centred at zero, and we set $B(t) = b^3(t+2)$, so supp$(B) = (-2,2)$.  For $j \ge 3$ the basis functions are $\phi_j(t) = B(t-j)$, and we choose $\phi_j$ for $j=0:2$ to satisfy the parabolic runout conditions at $t=0$, namely
\[
  \phi_0(t) = B(t) + 3\, B(t+1)\,,\ \ \phi_1(t) = B(t-1) - 3\, B(t+1)\,,\ \ \phi_2(t) = B(t-2) +  B(t+1)\,.
\]
This means that
\be
  \sum_{j=-1}^\infty j^r \left(\phi_j(t) - B(t-j)\right) = 0 \quad\mbox{for $t \in [0,\infty)$ for $r=0:2$}\,,
\label{eq:bfsum}
\ee
where we set $\phi_{-1}(t) \equiv 0$, and in particular it ensures that quasi-interpolation in terms of $\left\{\phi_j\right\}_{j=0}^\infty$ is linearity-preserving on $[0,\infty)$.  Weights $q_j$ for the VIE \eq{vie} are given in terms of these basis functions by \eq{wtdef} and the approximation at $t= t_n$ is
\be
  \U_n(t_n-t) = \sum_{j=0}^{n} \vv_{n-j}\, \phi_j(t/\dt)\,,
\label{eq:vvcof}
\ee
where the $\vv_{j}$ coefficients are given by \eq{CSscheme}.  The individual coefficients can be directly obtained from $\U_n(t_n-t)$ by introducing dual basis functions $\db_k(t)$ such that
\[
  \int_0^\infty \phi_j(t)\, \db_k(t)\, dt = \delta_{j,k}\,.
\]
There are many ways in which a suitable dual basis can be chosen, and one possibility is to use continuous piecewise cubic functions on $[0,\infty)$ defined with respect to knots in $\N$.  For $k \ge 3$, we set $\db_k(t) = \db(t-k)$ where $\db$ is the even continuous piecewise cubic function on $[-2,2]$ which is zero at $\pm 2$ and is $C^1$ at the interior knots $\pm 1$, $0$ and satisfies
\[
  \int_{-2}^2 \db(t-\ell)\, B(t)\, dt = \delta_{0,\ell}\quad \mbox{for $\ell = 0:3$},
\]
and it is also possible to find suitable continuous piecewise cubics $\db_k$ with support in $[0,k+2)$ for $k=0:2$.  Calculating these dual basis functions in an algebraic manipulation package is straightforward, although their coefficients are messy and we omit their details.  Once the $\db_k$ have been obtained it can be shown by Taylor expanding that for any sufficiently smooth function $f$,
\be
  \int_{\max(0,k-2)}^{k+2} f(t_n - \dt s)\, \db_k(s)\, ds = f(t_{n-k}) -\frac{\dt^2}{6}\, f''(t_{n-k}) + \oo(\dt^4)\,.
\label{eq:dbexp}
\ee

Multiplying \eq{vvcof} by $\db_k(t/\dt)$ and integrating over $[0, \infty)$ gives
\[
  v_{n-k} = \int_{\max(0,k-2)}^{k+2} \U_n(t_n - \dt s)\, \db_k(s)\, ds
\]
and we now use this representation and \eq{dbexp} in order to obtain the best approximation of the solution $u$ of \eq{vie} in terms of the basis functions $\phi_j$ when \eq{smoothness} holds with $d = 4$.  Specifically, we set
\be
  u(t_n - t) = \sum_{j=0}^{n} u_{n-j}\, \phi_j(t/\dt) + \tilde{R}_n(t)\,,
\label{eq:usolrep}
\ee
where
\be
  u_{n-j} := \int_{\max(0,j-2)}^{j+2} u(t_n - \dt s)\, \db_j(s)\, ds = u(t_{n-j}) -\frac{\dt^2}{6}\, u''(t_{n-j}) + \oo(\dt^4)\,.
\label{eq:defuj}
\ee
We show that this scheme is fourth order accurate, and discuss its stability properties for non-smooth kernels.

\subsection{Convergence}
\label{sec:modbs.conv}

It follows from \eq{usolrep}--\eq{defuj} that the approximation error $e_n(t) := u(t_n - t) - \U_n(t_n -t)$ for this scheme is
\[
  e_n(t) = \sum_{j=0}^n \eps_{n-j}\, \phi_j(t/h) + \tilde{R}_n(t)
\]
where $\eps_k = u_k - v_k$.  We now show that modifying the basis functions near $t = 0$ as described above improves the scheme's accuracy from second to fourth order.

\begin{theorem}
Suppose that the conditions \eq{smoothness} and
\eq{zerobc} hold for $d \ge 6$.  Then there exists a constant $C$ independent of $n$ and $\dt$ such that if $t_n \le T$, the VIE approximation error satisfies
\be
  |e_n(t)| \le C\, \dt^4
\label{eq:moderrbd}
 \ee
for $t  \in [t_1, t_n]$.
\label{thm:moderrbd}
\end{theorem}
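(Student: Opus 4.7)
The plan is to adapt the framework of Theorem \ref{thm:errbd} to the modified cubic basis, exploiting both the sharpened approximation power of the representation \eq{usolrep}--\eq{defuj} and the parabolic runout conditions \eq{bfsum}. First, I would verify that the representation remainder satisfies $|\tilde{R}_n(t)| \le C\dt^4$ uniformly for $t \in [t_1, t_n]$. The subtraction of $(\dt^2/6)u''$ in the coefficient definition \eq{defuj} is precisely the Euler--Maclaurin--type correction that promotes cubic B-spline quasi-interpolation from second- to fourth-order on the interior, while the three identities \eq{bfsum} (preservation of $1,t,t^2$ by $\phi_0,\phi_1,\phi_2$) ensure that the same fourth-order accuracy extends down to $t=t_1$ despite the non-translate basis functions near the origin. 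This step is established by Taylor expanding $u(t_n - \dt s)$ around $t_{n-k}$ in \eq{dbexp} and combining with the standard cubic B-spline reproduction identities.

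Second, I would subtract the scheme equation \eq{CSscheme} from the exact relation obtained by substituting \eq{usolrep} into \eq{vie} at $t=t_n$, to obtain
\[
\sum_{j=0}^n q_j\, \eps_{n-j} \;=\; \rho_n\,, \qquad \rho_n := -\int_0^{t_n} K(t)\,\tilde{R}_n(t)\,dt\,.
\]
A naive bound using the stability coefficients $p_n$ would only deliver $|\eps_n| = \oo(n\dt^4) = \oo(\dt^3)$, so instead I would mimic the second-difference manipulation used in Theorem \ref{thm:errbd}: expand the weights as in \eq{wvalvar} (now retaining additional Taylor terms of $K$ that vanish when differenced), and show that $\rho_{n+1}-2\rho_n+\rho_{n-1} = \oo(\dt^5)$.

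Third, this would produce a one-step recurrence of the form \eq{epsrec} but with $\gamma_n = \oo(\dt^5)$, which recasts as a vector recurrence of the form \eq{vecdel} driven by the same circulant matrix $M$ whose eigenvalues lie on the unit circle. Repeating the Ortega Class~M argument supplies a norm $\|\cdot\|_*$ in which $\|M\|_* = 1$, leading to the discrete Gronwall inequality
\[
\nu_{n+1} \;\le\; (1+C\dt)\,\nu_n + C\dt^2 \sum_{\ell=0}^n \nu_\ell + C\dt^5\,,
\]
with initial data $\nu_0 = \oo(\dt^d) = \oo(\dt^4)$ by \eq{zerobc}. The solution, exactly as in \eq{nudiff} but with $\dt^5$ replacing $\dt^3$, yields $\|\ul{\delta}^n\|_* \le C\dt^4 e^{CT}$ for $t_n \le T$, and hence $|\eps_k| \le C\dt^4$ for all relevant $k$, giving \eq{moderrbd}.

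The main obstacle will be establishing the second-difference bound $\rho_{n+1}-2\rho_n+\rho_{n-1} = \oo(\dt^5)$: pointwise one only has $\tilde{R}_n(t) = \oo(\dt^4)$, and to gain the additional factor of $\dt$ one must carefully exploit (i) the particular form of the $-(\dt^2/6)\,u''$ correction built into \eq{defuj}, which cancels the leading Taylor term in the representation error, and (ii) the telescoping in the boundary contributions from $\phi_0,\phi_1,\phi_2$ that is forced by \eq{bfsum}. Keeping track of these cancellations through the non-translate basis functions near $t=0$, while simultaneously handling the $K$-dependent perturbations of the weights analogous to \eq{wvalvar}, is where the bulk of the technical work lies.
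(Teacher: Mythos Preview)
Your overall architecture matches the paper's proof: bound $\tilde R_n$, form the defect equation, take a second central difference to gain a factor of $\dt^2$ on the right-hand side, and close with an Ortega Class~M / Gronwall argument.  There is, however, one genuine gap.

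You assert that the vector recurrence is ``driven by the same circulant matrix $M$'' as in Theorem~\ref{thm:errbd}.  It is not.  The matrix $M$ arises from the leading (constant--kernel) part of the second difference of $\sum_j (q_j/\dt)\,\eps_{n-j}$, and the modified basis has different starting weights: here $q_0/\dt = 5/8$, $q_1/\dt = 5/6$, $q_2/\dt = 25/24$ (see \eq{qmod}), not the isogeometric values $(j+1)/4$.  Consequently the second-differenced leading relation is
\[
\tfrac{15}{24}\,\eps_{n+1} - \tfrac{10}{24}\,\eps_n - \tfrac{6}{24}\,\eps_{n-2} + \tfrac{1}{24}\,\eps_{n-3} \;=\; \oo(\dt)\cdot(\mbox{lower terms}) + \oo(\dt^5)\,,
\]
so the bottom row of $M$ is $[-1,\,6,\,0,\,10]/15$, not $[1,0,0,0]$.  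This $M$ is not circulant, and you cannot simply invoke the roots-of-unity argument from the earlier proof.  What the paper does (and what you must do) is compute this new $M$ from \eq{qmod}, check directly that its eigenvalues are distinct with $\rho(M)=1$ (one eigenvalue is $\lambda=1$; the others are roots of a cubic), and only then apply Ortega's Class~M norm.  Without this verification the stability step of your argument is unsupported: a different set of parabolic-runout weights could in principle produce $\rho(M)>1$ and destroy the $\oo(\dt^4)$ bound.

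A minor related point: the change in the $q_j$ also affects the $\oo(\dt)$ perturbation matrices $W_\ell$ in \eq{vecdel}, so the analogue of \eq{wvalvar} you appeal to must be redone for the modified basis rather than quoted from Section~\ref{sec:bspline}.
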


\begin{proof}
It is straightforward to verify that if $s \in [0,1)$,
\[
  \tilde{R}_n(t_k + \dt s) = u(t_{n-k} - \dt s) - \sum_{\ell = -1}^2 u_{n-k-\ell}\; \phi_{k+l}(k+s)
\]
where $\phi_{-1}$ is taken to be zero, and hence it follows from \eq{bfsum}, \eq{defuj} and standard B-spline properties that
\[
  \tilde{R}_n(t_k + \dt s) = \left\{\begin{array}{l}
    \dt^3(1-s)^3\, u'''(t_n)/6 + \oo(\dt^4)\quad\mbox{when $k=0$}\\
    \oo(\dt^4)\quad\mbox{when $k\ge 1$\,.}
  \end{array}\right.
\]
It is thus sufficient to show that each $\eps_k = \oo(\dt^4)$ (because at most four basis functions are nonzero for any $t$).  The proof follows that of Thm.~\ref{thm:errbd}, and the expression analogous to
\eq{defRn} is
\[
  \sum_{j=0}^{n-1} \frac{\q_j}{\dt}\; \eps_{n-j} =
  -\sum_{k=0}^{n} \int_0^{1} K(t_k + \dt s)\, \tilde{R}_n(t_k + \dt s)\, ds + \oo(\dt^5)\,. 
\]
Taking the second central difference, using the fact that when $K \equiv 1$ the scheme coefficients are
\be\label{eq:qmod}
  q_0 = 5\dt/8\,,\quad q_1 = 5\dt/6\,,\quad q_2 = 25\dt/24\,, \quad\mbox{and\quad $q_j = \dt$ for $j \ge 3$}
\ee
gives an expression like \eq{vecdel} with $m=3$ where the bottom row of the matrix $M$ is now $[-1, 6, 0, 10]/15$.  The eigenvalues of $M$ are all distinct and its spectral radius is $\rho(M) = 1$, and this again yields a bound $\nu_n$ which satisfies a difference scheme like \eq{nudiff} whose final term is $C\, \dt^5$.
This gives an $\oo(\dt^4)$ bound for each
$|\eps_k|$ with $t_k \le T$, and \eq{moderrbd} follows.
\end{proof}

\subsection{Stability for discontinuous and highly oscillatory kernels}
\label{sec:modbs.stab}

We first consider the discontinuous kernel introduced in Section \ref{sec:BSstab.disc}.  The duration
$L = (\Md+r)\dt$ is fixed independent of $\dt$ such that integer
$\Md\ge 5$ and $r \in [0,1)$. In this case the $q_j$ are given by \eq{qmod} for $j=0:\Md-2$, the coefficients $q_j$ for $j=\Md-1:\Md+2$ are polynomial in $r$ and satisfy
\[
    q_{\Md-2}=\dt\ \ge q_{\Md-1} \ge q_{\Md} \ge q_{\Md+1} \ge q_{\Md+2} \ge 0\,,
\]
and $q_j = 0$ for $j \ge \Md +3$.

The stability proof follows that of \cite[\S 3.1.3]{cq2}; forward differencing \eq{defpn} gives
\[
       15\,p_n + 5\,p_{n-1} + 5\,p_{n-2} - p_{n-3} + 24 \sum_{j=\Md-1}^{\Md+3}\frac{(q_j-q_{j-1})}{\dt}\;p_{n-j} = 0
       \quad \mathrm{for}\ n \ge 2,
\]
where the first stability factors are $p_0 = 1$, $p_1 = -q_1/q_0 = -4/3$ and we set
$p_j=0$ for $j<0$.  Similar manipulations to those in the previous subsection then give
\[
  |p_n| \le \frac{11}{15} \,\cumulmax_{n-1} + \frac{8}{5} \,\cumulmax_{n-(\Md-1)}
      \quad \mathrm{for}\ n \ge 2\,
\]
where now $\cumulmax_k = \max_{j\le k}|p_j|$\,.  It then follows from similar arguments to those used in \cite[\S 3.1.3]{cq2} that if $n \ge \Md-1$ then
\[
  \cumulmax_n \le \frac{11}{15} \,\cumulmax_{n-1} + \frac{8}{5} \,\cumulmax_{n-(\Md-1)} \le \frac{11}{15} \,\cumulmax_{n} + \frac{8}{5} \,\cumulmax_{n-(\Md-1)}\,,
\]
i.e.\ $\cumulmax_n \le 6 \, \cumulmax_{n-(\Md-1)}$, and $|p_n| \le \cumulmax_n = 4/3$ for $n = 1:\Md-2$.  In combination these give the stability bound
\[
   |p_n| \le \cumulmax_n \le C\, 6^{n/(\Md-1)} \le C\, 6^{(T+1)/L}
\]
for all $n \le T/\dt$ when $\dt$ is sufficiently small, and so
the stability coefficients are bounded
independently of $\dt$ as required by Def.~\ref{def:stability}.  Note that this bound is very pessimistic, and in practice the factor is about $1.5^{T/L}$.

This scheme is also stable when applied to the oscillatory kernels $J_0(\w t)$ and $\cos(\w t)$ examined in Section \ref{sec:BSstab.bessel}.
The analysis is much simpler to carry out for this scheme, since there is no problem with poles of $1/Q(\xi)$ on or near the unit circle $|\xi|=1$.
We find that
\[
     |p_n| \le C e^{\sigma T}
      \quad \mathrm{where} \quad
      C = \left\{ \begin{array}{rc} 2.2\,, & K(t)=\cos(\w t) \\
                         1.5\,, & K(t)=J_0(\w t)
                       \end{array}\right.
\]
for all $\w \dt \in [0,20\pi]$ and beyond.
This is verified in tests computing $p_n$ directly from \eq{defpn} for a finite number of steps $n \le 2500$ and the same range of values of $\w\dt$. They indicate that $|p_n| \le 1.82$ (for $\cos(\w t)$) and $|p_n| \le 4/3$ (for $J_0(\w t)$), consistent with the
estimate above.

\subsection{Numerical results}
\label{sec:modbs.numres}

\begin{figure}[h!]
\begin{center}
\includegraphics[width=0.75\textwidth]{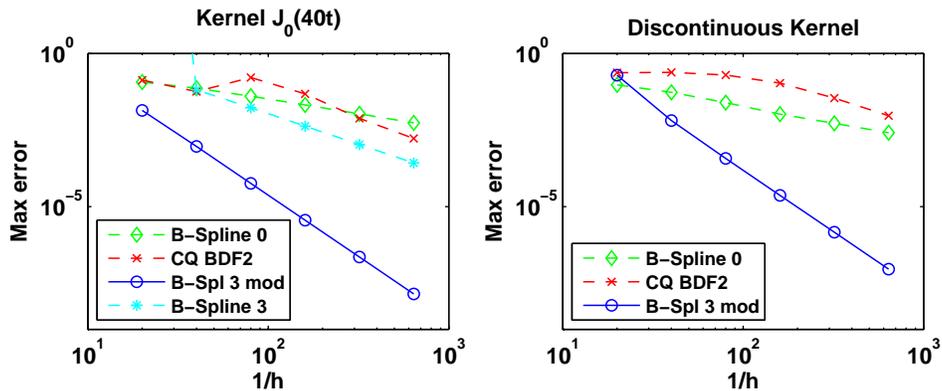}
\end{center}
\caption{\label{fig:viecompare} Convergence results for the approximation of \eq{vie}
with maximum time $T=10$ and $a(t) = t^6 \exp(-50(t-1/2)^2)$ (see text for details).  The stability and $\oo(\dt^4)$ convergence rate of modified B-spline 3 is clear for both problems.}
\end{figure}

Numerical comparisons of the new modified cubic B-spline approximation for \eq{vie} are compared with the CQ BDF2 method and with the convolution B-splines with $m=0$, $3$ from Sec.~\ref{sec:bspline} in
Figure~\ref{fig:viecompare} when $a$ satisfes \eq{zerobc} with $d=4$.
The convergence rates for a
smooth kernel (left subplot) are as expected:
CQ BDF2 has rate $\oo(\dt^2)$; B-splines 0 and 3 have rates $\oo(\dt)$ and $\oo(\dt^2)$;
and the modified B-spline 3 has rate $\oo(\dt^4)$.
The discontinuous kernel problem (right subplot) has the same convergence rates, apart from the isogeometric $m=3$ approximation which was shown to be unstable in Section
\ref{sec:BSstab.disc}, and is not illustrated.

\newsec{Convolution splines for TDBIEs}
\label{sec:tdbie}

The results shown here are obtained by approximating the solution of the TDBIE \eq{tdbie} in space by piecewise constant basis functions on a generally irregular triangular grid, and in time by convolution splines as
\[
   u(\vecx,t_n-t') \approx \sum_{j=0}^n \sum_{k=1}^{M} u_k^{n-j}\, \phi_j(t'/\dt)\, \eta_k(\vecx)\,,
\]
where the $\phi_j$ are the modified cubic B-splines from Sec.~\ref{sec:modbs.deriv},
\[
  \eta_k(\vecx) = \left\{ \begin{array}{ll} 1 &  \mbox{for $\vecx \in \ele_k$}\\ 0 & \mathrm{otherwise} \end{array} \right.
\]
and $\ele_k$ is the $k$th triangle on the surface $\Gamma$.
The spatial Galerkin formulation of the problem gives the time-marching scheme \eq{BIEscheme} with
\[
    \tens{Q}_{j,k}^m = \int\!\!\!\int_{\ele_j \times \ele_k} \!\!\! \frac{\phi_m(|\vecx-\vecy|/\dt)}{|\vecx-\vecy|}\; d\vecx\, d\vecy,
    \quad \mbox{and} \quad
    a_j^n = \int_{\ele_j} a(\vecx,t^n)\, d\vecx \,.
\]
The matrices $\tens{Q}^m$ are symmetric and calculating each component involves a four dimensional integral.
The off-diagonal components and the components $a_j^n$ have smooth integrands and are approximated by a composite triangular quadrature with
16 sub-triangles, each of which is fourth order with 6 quadrature points.
Each diagonal element of the $\tens{Q}^m$ has a singular integrand, and is first converted into smooth subintegrals using
a Duffy-type transformation, and then approximated by the same quadrature rule
as the rest of the calculation.

The piecewise constant spatial approximation is globally first order accurate (i.e.\ $\oo(\dx)$), but there is local (second order) superconvergence at the element midpoints, and this is exploited in the figures below.  In particular, $\|\cdot\|_\infty$ is the discrete $L_\infty$ norm measured at element midpoints.  Similar results are obtained when piecewise linear spatial basis functions are used (this is globally second order).  A more accurate spatial approximation could potentially be used (to take advantage of the fourth order accuracy in time), along with higher order quadrature.

\begin{figure}[ht]
\begin{center}
\includegraphics[width=0.75\textwidth]{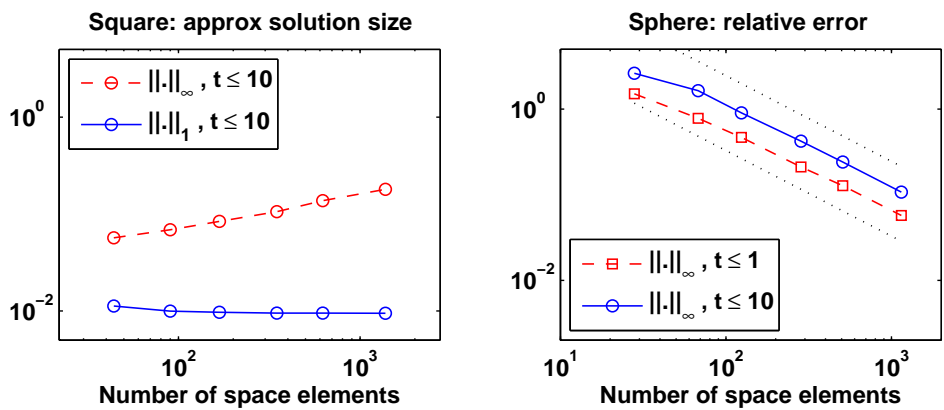}
\end{center}
\caption{\label{fig:squareandsphere} Solution plots for the TBIE \eq{tdbie} when the scattering surface $\Gamma$ is a unit square plate (left plot) and a unit sphere (right plot).  See text for details of the space and time approximations used.}
\end{figure}
\begin{figure}[ht]
\begin{center}
\includegraphics[width=0.75\textwidth]{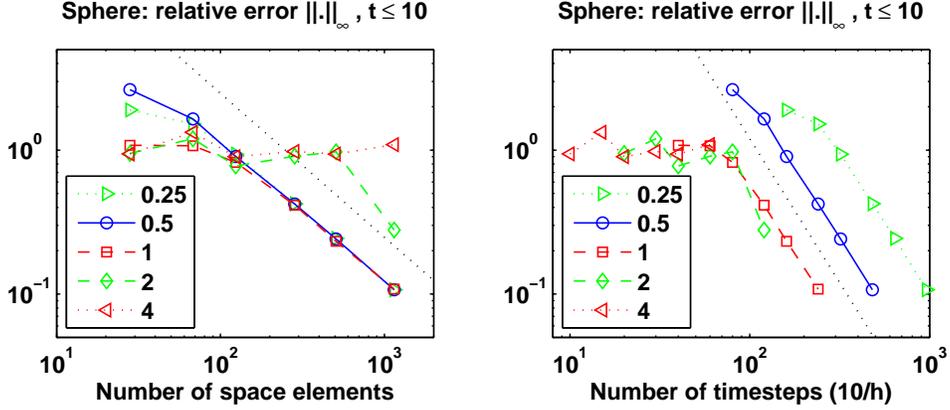}
\end{center}
\caption{\label{fig:ratvar} Solution plots for the TBIE \eq{tdbie} for mesh ratios $\dt/\dx$ ranging from 0.25 to 4 when the scattering surface $\Gamma$ is a unit sphere. The dotted line indicates second order convergence.  See text for details of the space and time approximations used.}
\end{figure}

Figure~\ref{fig:squareandsphere} shows results for \eq{tdbie} when $\Gamma$ is a flat plate and a sphere, both with incident field $a(\vecx,t) = a_0(t+t_0-|\vecx|)/|\vecx|$, where $a_0(t) = t^4 \exp(-20(t-1/2)^2)$ for $t > 0$.  In these two tests the mesh ratio is chosen to be $\dt/\dx = 1/2$\,, where $\dx$ is the size of a typical space mesh element.
The left-hand graph shows the size (and hence stability) of the approximate solution on a
unit square plate.
The growth in the $L_\infty$ norm is due to a corner singularity in the
exact solution \cite{HoMaSt} while the maximum of the 1-norm is well-behaved as the mesh is refined.
The right-hand plot shows the maximum relative error (i.e.\ the error normalised by the maximum size of the solution) for scattering
from a unit sphere -- an exact solution for this problem is given in \cite{SaVe2011a}.
The dotted lines show a second order convergence slope, and this is the best which can be expected
from the spatial approximation, despite the higher order accuracy of the temporal approximation.

Figure \ref{fig:ratvar} demonstrates the impact of changing the mesh ratio $\dt/\dx$ in the sphere scattering example described above. The left plot shows the maximum relative error in the solution
against the number of space elements, and the right plot shows the dependence of this error on $\dt$. As one would expect for a scheme with higher order accuracy in time than space, the time error decreases faster than the space error when the mesh is refined with fixed mesh ratio, and the space error eventually dominates.
This is clear on the left plot for ratios $\dt/\dx = 0.25,0.5$ and $1$.
For the larger mesh ratios the time step size is simply too big to resolve the input function accurately, and the asymptotic convergence regime has not yet been reached.
Increasing the mesh ratio decreases the number of time steps (and hence matrices $\tens{Q}^m$) used, but increases the number of non-zero entries in each matrix.
However the net result is a decrease in the computational cost for both the time marching calculation and the computation of the matrices $\tens{Q}^m$.

\begin{figure}[ht]
\begin{center}
\includegraphics[width=0.75\textwidth]{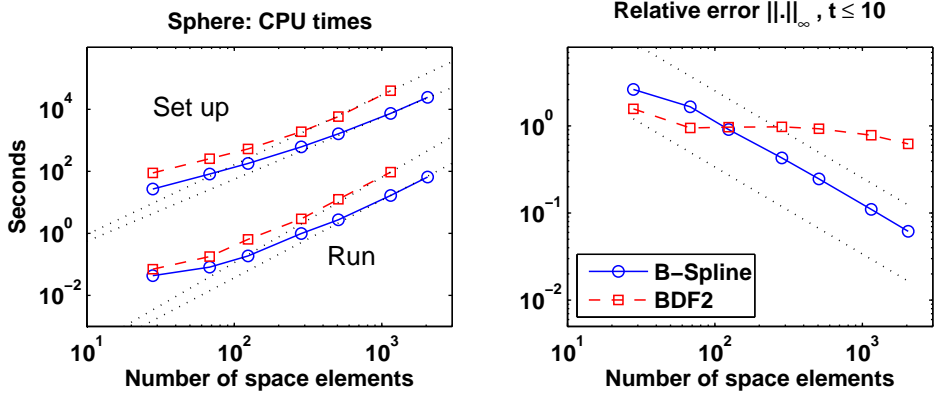}
\end{center}
\caption{\label{fig:cpu-err} Solution plots for the TBIE \eq{tdbie} with time approximation given by convolution cubic splines (blue solid) and CQ based on BDF2 (red dashed) when $\Gamma$ is a unit sphere. \textbf{Left plot:} shows the CPU time in seconds for computing the system matrices (set up) and the algorithm run time -- the dotted lines represent the asymptotic computational complexity $\oo(N^p)$ for each approach described in the text.  \textbf{Right plot:} shows the relative error vs $N_S$ for each method, and the dotted lines indicate second order convergence.}
\end{figure}

The final set of results compares the performance of convolution cubic splines for time-stepping TDBIEs with that of CQ based on BDF2.  The \textbf{set-up time} for each method is proportional to the number of non-zero entries in the $\tens{Q}^m$ matrices of \eq{BIEscheme} (with sufficiently small entries in the CQ case ignored).  For convolution spline (and space-time Galerkin) methods this is $\oo(N^4)$ when $\dt \approx \dx = 1/N$.  The support of the CQ basis functions grows with $m$, as illustrated in Fig.~\ref{fig:basis_BBDF2} (see also \cite{HaKrSa2009}), and the set-up cost for this method is $\oo(N^{4.5})$.  As described in Sec.~\ref{sec:intro.tdbie}, the \textbf{run time} for the basic convolution spline schemes (i.e.\ without using a plane-wave or other fast method to speed it up) is $\oo(N^5)$, and for the basic CQ approach it is $\oo(N^{5.5})$.  The left-hand plot of Fig.~\ref{fig:cpu-err} shows a graph of CPU time against the number of space elements and the dotted lines in each case show the asymptotic computational complexity $\oo(N^p)$ as tabulated below.  Note that although for each method the run time is clearly growing faster with $N_S \approx N^2$ than the set up time, the set-up time dominates for problems of moderate size.

\begin{table}[h]\begin{center}

\begin{tabular}{l|c|c}
Method & Setup & Run\\ \hline
{spline} & $\oo(N^4)$ & \st$\oo(N^{5})$\\
{CQ} & \st$\oo(N^{4.5})$ & $\oo(N^{5.5})$\\
\end{tabular}

\caption{Computational complexity $\oo(N^p)$ of the set up and run times for algorithm \eq{BIEscheme} with with time approximation given by convolution spline and CQ, where $\dt \approx \dx = 1/N$.}

\end{center}\end{table}


The right-hand plot of Fig.~\ref{fig:cpu-err} shows the approximation error for the two schemes, with dotted lines of slope $\oo(\dx^2)$.  The far superior accuracy of the convolution spline approximation is clear, and the poor performance of the BDF2 CQ method is because
it is highly damped -- although it is second order convergent, the mesh has to be very fine for this to be apparent over a long time calculation.  This is further illustrated in Fig.~\ref{fig:sol}, which shows the potential $u$ calculated on the surface of the sphere vs time when $N_S = 508$.  The convolution spline approximation matches the exact solution (which is independent of $\vecx$, see \cite{SaVe2011a}) extremely well.

\begin{figure}[h]
\begin{center}
\includegraphics[width=0.8\textwidth]{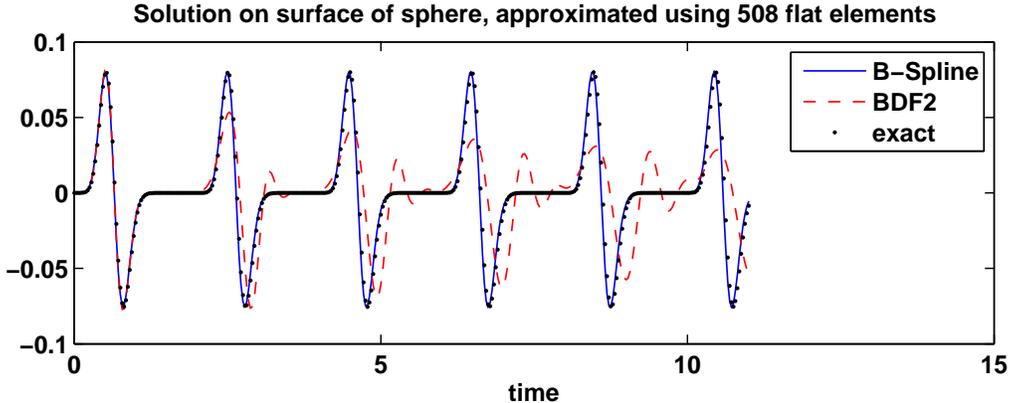}
\end{center}
\caption{\label{fig:sol} Approximate solution $u$ of \eq{tdbie} plotted against time when $\Gamma$ is a unit sphere.  The exact solution is independent of $\vecx$ and is shown as black dots, and the approximate solution is calculated using convolution spline (blue solid) and CQ based on BDF2 (red dashed).}
\end{figure}

\newsec{Conclusions}
\label{sec:conc}

We have derived a new framework for time-stepping approximations of the TDBIE \eq{tdbie}.  The system matrices $\tens{Q}^m$ of the resulting scheme \eq{BIEscheme} have the same degree of sparsity as for a space--time Galerkin approximation, but are much more straightforward to calculate, especially when higher order (smoother) B-splines are used.  The method is constructed as an approximation scheme for the VIE \eq{vie}, and key properties
are its backward time aspect \eq{CQapprox}, that the basis functions have compact support and are (mainly) translates, and they satisfy the sum to unity condition \eq{pou}.

These properties permit a full stability analysis for VIEs with kernels which capture some of the important properties of TDBIE problems, as illustrated for B-spline basis functions in Secs.~\ref{sec:bspline.stab} and \ref{sec:modbs.stab}.  In particular the analysis gives a stability bound for the $p_n$ coefficients for the Bessel function kernel \eq{BFvie} when $\w \approx 1/\dt$.  This means that the convergence proof in \cite{DaDu2003} for the TDBIE \eq{tdbie} on $\Gamma = \R^2$ could be applied to an approximation which is a Fourier interpolant in space and a convolution spline in time, without the need to impose an additional (essentially uncheckable) stability assumption.
The modified cubic convolution spline approximation of Sec.~\ref{sec:modbs} is fourth order accurate and gives a very stable approximation for discontinuous and highly oscillatory VIE kernels.  The TDBIE numerical test results indicate that the scheme is very stable and performs well.

There is current interest in TDBIE time-stepping methods which can use variable time-steps.
See e.g.\ \cite{Lo-FeSa2011,Lo-FeSa2012} for convolution quadrature methods and \cite{glafke} for
space and time adaptation in the full space-time Galerkin method for scattering problems in 2D space.
Because our TDBIE time-stepping method is based on B-splines (whose key approximation properties are retained for a non-uniform knot distribution) and standard piecewise polynomials in space, the various strategies described in \cite{glafke} for space and time adaptation could be applied.
However, using variable time-stepping in any TDBIE approximation algorithm imposes an overhead because it essentially involves recalculating the $\tens{Q}^m$ matrices of \eq{BIEscheme} at every time step, and this is extremely expensive, as illustrated in the left-hand plot of Fig.~\ref{fig:cpu-err}.

We note that there are also other choices of basis functions which seem to give stable approximations of \eq{vie} and \eq{tdbie} when used in the same convolution framework.  Non-polynomial temporal basis functions $\phi_j$ are introduced in \cite{cq2}; they are translates for $j \ge 2$, and for $j = 0:1$ they are defined as described in \cite{schaback} for radial basis function (RBF) multi-quadrics in order to ensure that quasi-interpolation in terms of $\left\{\phi_j\right\}_{j=0}^\infty$ is linearity-preserving.  They also work well as a temporal approximation of the TDBIE \eq{tdbie}, but because the basis functions are global the system matrices need to be sparsified (but this is straightforward because they are highly peaked).
The method derived in \cite{cq2} is second order accurate, and the fourth order modified
cubic B-spline approximation of Sec.~\ref{sec:modbs} is a significant improvement.  Extension of this approach to modified B-splines with $m > 3$ is work in progress.


\bibliographystyle{plain}
\bibliography{cqbib}

\end{document}